\newcommand{\R}{\mathbb{R}}
\newcommand{\xx}{\mathbf{x}}
\newcommand{\kk}{\mathbf{k}}
\newcommand{\ff}{\mathbf{f}}
\newcommand{\mm}{\mathbf{m}}
\newcommand{\uu}{\mathbf{u}}
\newcommand{\ga}{\alpha}
\newcommand{\gb}{\mathbf{\beta}}
\newcommand{\gc}{\gamma}
\newcommand{\gD}{\Delta}
\newcommand{\po}{\partial}
\newcommand{\ve}{\varepsilon}
\newcommand{\gd}{\delta}
\newcommand{\gl}{\lambda}
\newcommand{\Om}{\Omega}
\newcommand{\grad}{\nabla}
\newcommand{\bea}{\begin{eqnarray}}
\newcommand{\eea}{\end{eqnarray}}
\theoremstyle{plain}
\newtheorem{theorem}{Theorem}[section]
\newtheorem{lemma}{Lemma}[section]
\newtheorem{proposition}{Proposition}[section]
\theoremstyle{definition}
\theoremstyle{remark}
\newtheorem{remark}{Remark}[section]
\numberwithin{equation}{section}
\begin{document}

\title[Subsonic Flows for Full Euler Equations]
{Subsonic Flows for the Full Euler
Equations in Half Plane}
\author
{Jun Chen  }
\address{J. Chen, Department of Mathematics\\
         University of Wisconsin-Madison\\
         Madison, WI 53706-1388, USA}
\email{jchen@math.wisc.edu}

\date{}

\keywords{Subsonic flows; full Euler equations; polytropic gas; nonlinear elliptic equations. }

\subjclass[2000]{35J15; 35M20; 76G25}

\begin{abstract}
We study the subsonic flows governed by full Euler equations in the
half plane bounded below by a piecewise smooth curve asymptotically
 approaching $x_1$-axis. Nonconstant conditions in the
  far field are prescribed
to ensure the real Euler flows. The Euler system is reduced to a single
elliptic equation for the stream function. The existence, uniqueness
 and asymptotic
behaviors of the solutions for the reduced equation are established
by Schauder fixed point argument and some delicate estimates. The
existence of subsonic flows for the original Euler system is proved
 based on the results for the reduced equation, and their
 asymptotic behaviors in the far
field are also obtained.

\end{abstract}

\maketitle

\section{Introduction}

In this paper, we study subsonic polytropic flows governed by
two-dimensional steady, full Euler equations:
\begin{equation}
\left\{\begin{aligned}
& \nabla\cdot  \mm=0, \\
&\nabla\cdot\left(\frac{\mm\otimes\mm}{\rho}\right)
  +\nabla p=0,\\
&\nabla\cdot\left(\mm(E+p/\rho)\right)=0,
\end{aligned}\right.
\label{eqn-Euler1}
\end{equation}
where $\grad$ is the gradient in $\xx=(x_1,x_2)\in\R^2$, $\mm=(m_1,
m_2)$ the momentum, $\rho$ the density, $p$ the pressure, and
$$
E=\frac{|\mm|^2}{2 \rho^2}+\frac{p}{(\gamma-1)\rho}
$$
the energy with adiabatic exponent $\gamma>1$. The sonic speed of
the flow is defined by
$$
c=\sqrt{\gamma p/\rho}.
$$
The flow is said to be subsonic if $|\mm / \rho| < c$.

To my best knowledge, no theoretical result was obtained for
subsonic flows governed by full Euler system in unbounded domain.
There are rich literatures of subsonic potential flows, which is a
simplified model for Euler flows. Shiffman obtained the first
existence result in \cite{shiffmen}, using variational method. In
\cite{bers}, Bers used complex analysis to show existence and
uniqueness for the subsonic potential flows. Finn and Gilbarg
\cite{fg,fg2} solved the problem by PDE approach. Recently,
Chen-Dafermos-Slemord-Wang \cite{cdsw} pushed the subsonic flows to the
sonic limit, using the framework of compensated
compactness. With in the same framework, Chen-Slemord-Wang
 \cite{csw} obtained transonic solutions by a vanishing viscosity
method. Other results for subsonic or transonic flows of
various models can be found in
\cite{ckk1,ckk2,ccf,ccs,cm,cm2,cm3,cm4,cm5,schen1,schen2,hk,gm,M,serre,xy,yzheng,yzheng2},
and \cite{cofr,dong,fried} provide related background and
introduction.

The domain we study is the upper half plane with piecewise smooth
boundary asymptotically flattened as $|x_1| \to \infty$. This
setting can be viewed as a symmetric airfoil problem. For a airfoil
symmetric about $x_1$-axis, we cut the exterior domain in half along
the symmetry axis, and the upper part becomes our domain. More
generally, we allow the boundary to be curved away from the profile.
In this way, our setting also includes the model for the wind
glancing the landscape.

Let $U=(\mm, p, \rho)$ be the solution for the subsonic flow. We
prescribe an asymptotic limit $U_\infty$, close to a constant
subsonic state $U_0$,  for the flow in  the far field. Unlike the
setting for potential flows, the asymptotic behavior $U_\infty$ is
not a constant state. Otherwise, the full Euler system can be
reduced to a potential flow (cf. Proposition \ref{prop-potential}).
To guarantee the convergence of the flows to $U_\infty$ in the far
field, we need to obtain some decay property for $\psi -l$, which is
the difference between the stream function and its limit behavior.
For the whole plane, one knows that the fundamental solution for a
Laplace equation has the form $\log |\xx|$. Therefore, in general,
we can not expect the decay of a solution for an elliptic equation
as $|\xx| \to \infty$ in the exterior domain of the whole plane.
This is the main technical obstacle for us to obtaining the subsonic
flows in the whole plane. However, when flows are restricted in the
half plane, we can exclude the logarithmic growth of  solutions by
prescribing proper decay condition at the infinity.

 We reduce the
Euler system to a single elliptic equation \eqref{psi-elliptic} for
a stream function $\psi$ by capturing some conservation properties
of the system. More precisely, three properties are contained in
\eqref{psi-elliptic}: (1) existence of $\psi$ represents
conservation of mass; (2) we use the fact $p/\rho^\gc$ is constant
along streamlines during the reduction, which implies entropy is
conserved on each streamline. (3) to solve for $\rho$ in terms of
$\psi, \grad \psi$, we use Bernoulli's law, which relates to the
conservation of energy. Usually, stagnation points occur in various
situations and cause major difficulties (for instance, regular
reflection for Euler equations in self-similar coordinates). Our
reduction process enables us to bypass the difficulty and to obtain
the existence of solutions. However, we do not have uniqueness
result due to the existence of stagnation points and complex
behaviors of streamlines. More details are explained in Remark
\ref{rem-unique}.

Once the Euler system is reduced to the elliptic equation
\eqref{psi-elliptic}, the remaining work is to solve this nonlinear
equation. In detail, we first truncate the original domain $\Om$ by
ball $B_R (O)$ centered at the origin with radius $R$. We solve
\eqref{psi-elliptic} in bounded domain $\Om_R = \Om \cap B_R (O)$
with properly prescribed boundary condition. It is a standard method
that we linearize \eqref{psi-elliptic}, construct a map $T$ by
solving the linearized equation, and prove the existence of a fixed
point for $T$ by Schauder fixed point theorem. The fixed point
$\psi_R$ is the solution for \eqref{psi-elliptic} in domain $\Om_R$.

To take the limit of $\{\psi_R\}$ as   $R \to \infty$ and obtain the
solution in $\Om$, the estimates should be independent of the radius
$R$. It makes the estimates complicated  that there is no sign for
the coefficient $b_0$ in the linear equation \eqref{eqn-lin-psi}. By
choosing a proper barrier and using maximum principle with no
restriction on the sign of $b_0$ (Lemma \ref{lem-compare}), we
obtain uniform estimates, independent of $R$, for $\psi_R$. The
barrier function we construct only works for the half plane, not the
whole plane.
 Whether one can find a suitable barrier function for the whole plane
is unclear at this moment.

The rest of the paper is organized as follows. In Section 2,
we set up the subsonic flow problem, introduce the weighted norms, and
state the main result. Section 3 explains the reduction of the full Euler system to
a single elliptic equation for the stream function $\psi$. In Section 4, we
prove a technical lemma for the linearized equation and obtain crucial esitmates.
In Section 5, we construct a
iteration scheme to solve the nonlinear equation  \eqref{psi-elliptic}
 in truncated domain
$\Om_R$. Schauder fixed point argument is used to prove the existence of the solution.
In Section 6, we take the limit of subsequence of solutions $\psi_R$ in $\Om_R$ and obtain
the solution in the half plane $\Om$. The relation between the original Euler system
 and the reduced system is explained.

\section{Setup of the Subsonic Flow Problem}


%
%
%
%

 In order to describe the conditions and results of our subsonic
 problem,  we need to use the following weighed H\"older norms:
for any $\xx, \xx' $ in a two-dimensional domain $\Om$ and for a
subset $P$ of $\po \Om$, define $\gd_\xx := \min
(\mbox{dist}(\xx,P),1)$, $\gd_{\xx,\xx'} := \min (\gd_\xx,
\gd_{\xx'},1)$, $\gD_\xx := \max(|\xx|,1)$ and $\gD_{\xx, \xx'}:=
\max(|\xx|, |\xx'|,1)$ . Let $\ga \in (0,1)$,  $\sigma, \beta \in
\mathbb{R}$, and $k$ be a nonnegative integer. Let $\kk = (k_1,
k_2)$ be a integer-valued vector, where $k_1, k_2 \ge 0$, $|\kk|=k_1
+k_2$ and $D^{\kk}= \po_{x_1}^{k_1}\po_{x_2}^{k_2}$. We define
\begin{eqnarray}
\nonumber
 &&[ u ]_{k,0;(\beta);\Om}^{(\sigma;P)}
  = \sup_{\tiny \begin{array}{c}
\xx\in \Om\\
 |\kk|=k
\end{array}}(\gd_{\xx}^{\max(k+\sigma,0)}\gD_\xx^{\gb+k}|D^\kk u(\xx)|), \\
 && {[ u ]}_{k,\ga;(\beta);\Om}^{(\sigma;P)}=  \sup_{\tiny
\begin{array}{c}
   \xx, \xx'\in \Om\\
 \xx \ne \xx'\\
|\kk|=k
 \end{array}}
\Big(\gd_{\xx,\xx'}^{\max(k+\ga+
\sigma,0)}\gD_{\xx,\xx'}^{\beta+k+\ga}\frac{|D^\kk u(\xx)-D^\kk
u(\xx')|}{|\xx-\xx'|^\ga}\Big),
 \nonumber\\
&& \|u\|_{k,\ga;(\gb);\Om}^{(\sigma;P)}= \sum_{i=0}^k {[ u ]}_{i,0
;(\gb);\Om}^{(\sigma;P)} + {[ u ]}_{k,\ga;(\gb);\Om}^{(\sigma;P)}.
\label{def-norm}
\end{eqnarray}

For a vector-valued function $\uu=(u_1, u_2, \cdots,u_n )$, we
define
\[
 \|\uu\|_{k,\ga;(\gb);\Om}^{(\sigma;P)} =
\sum_{i=1}^n  \|u_i\|_{k,\ga;(\gb);\Om}^{(\sigma;P)}.
\]

\begin{remark}
In the definition of the weighted norms, the lower index in the
parenthesis represents the weigh at the infinity and the upper index
represents the weight to the set $P$, which will be the set of some
corner points on the boundary in the paper.
\end{remark}

Define
 \begin{equation}\label{def-spaceHolder}
C^{(\sigma;P)}_{k,\ga;(\gb)}(\Om)= \{ u:
\|u\|_{k,\ga;(\gb);\Om}^{(\sigma;P)} < \infty\}.
\end{equation}

For the weighted norms of functions in one-dimensional space
$\Gamma=(a,b)$, with either $a=-\infty$ or $b=\infty$, we define
\begin{eqnarray}
\nonumber
  [f]_{k,0;(\gb);\Gamma} &=& \sup_{x\in \Gamma}(|x|+1)^{k+\gb}|f^{(k)}(x)| \\
  \nonumber
   [f]_{k,\ga;(\gb);\Gamma}&=&  \sup_{x , x'\in \Gamma, x\ne x'}
   (\max(|x|,|x'|)+1)^{k+\ga+\gb}\frac{|f^{(k)}(x)-f^{(k)}(x') |}{|x-x'|^{\ga}} \\
  \|f\|_{k,\ga;(\gb);\Gamma} &=&
  \sum_{i=0}^{k}[f]_{k,0;(\gb);\Gamma} + [f]_{k,\ga;(\gb);\Gamma}.
  \label{def-norm-f}
\end{eqnarray}

Our domain $\Om$ is the upper half  plane bounded below by a
piecewise smooth curve consisting of three parts:
\begin{equation}\label{def-bd}
    \po \Om = \Gamma_- \cup \mathcal{A} \cup \Gamma_+.
\end{equation}
The following is the description of the three parts for $\po \Om$ (see figure
\ref{fig-domain}).
\begin{figure}[htp]
\centering
\psfrag{Om}{\Large $\Om$}
\psfrag{Ga-}{ $\Gamma_-$}
\psfrag{Ga+}{ $\Gamma_+$}
\psfrag{A+}[c]{ $A_+$}
\psfrag{A-}[c]{ $A_-$}
\psfrag{A}[b]{ $\mathcal{A}$}
\psfrag{Streamlines}[b]{ Streamlines}
\includegraphics[width=13cm]{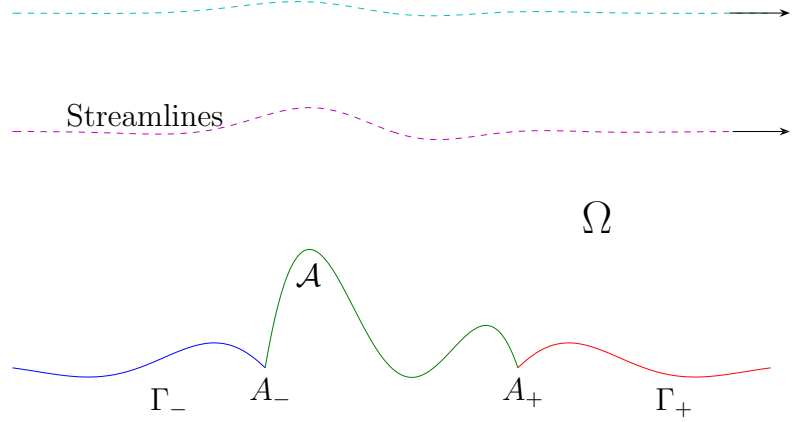}
\caption{Domain $\Om$ for Subsonic Flows}\label{fig-domain}
\end{figure}
Let $\Gamma_{\pm} =\{ x_2 = f_{\pm}(x_1)\}$, where $f_-$ is defined
on $(-\infty, -1)$ and $f_+$ is defined on $(1,\infty)$. Both
$\Gamma_-$ and $\Gamma_+$ approach $x_1$-axis as $|x_1|$ tends to
$\infty$. More precisely, we let
\begin{eqnarray}
\label{con-f-}
&& \|f_-\|_{2,\ga;(\ga+\gb);(-\infty,-1)}\le 1, \\
&&\|f_+\|_{2,\ga;(\ga+\gb);(1,\infty)}\le 1.\label{con-f+}
\end{eqnarray}

Let $A_- = (-1, f_-(-1))$ and $A_+ = (1, f_+ (1))$ be the end points
of $\Gamma_-, \Gamma_+$, respectively. The arch $\mathcal{A}$
connecting $\Gamma_{\pm}$ at $A_{\pm}$ can be parameterized by
\begin{equation}\label{def-profile}
    \ff(s)=(f_1(s), f_2(s)), \quad s\in (-1,1),
\end{equation}
where $\ff(-1) =A_-, \ff(1)=A_+$ and $f_1, f_2$ are $C^{2,\ga}$
smooth functions.

We assume the angles $\theta^0_{\pm}$ between $\Gamma_{\pm}$ and
$\mathcal{A}$ at points $A_{\pm}$ satisfies:
\begin{equation}\label{con-angles1}
   \gd < \theta^0_{\pm} < \pi -\gd,
\end{equation}
for a fixed constant $\gd$.

\begin{remark}
The above condition guarantees that stream function $\psi$ for the
flow is $C^{1,\ga}$ up to the corner points, which means the flow
$U$ is $C^{\ga}$ up to the corners. If we allow corner angles
$\theta_{\pm}^0 \ge \pi$, $\psi$ will be $C^{\ga}$ up to the
corners, and $U$ will blow up at corners. We exclude the latter
situation just to avoid unimportant details.
\end{remark}

Without loss of generality, we may also assume that
\begin{equation}\label{con-bd-profile}
    \mathcal{A} \subset B_{D_0}(0),
\end{equation}
i.e., $\mathcal{A}$ is  contained in  the ball of radius $D_0$
centered at $0$, and
\begin{equation}\label{con-boundary-above}
    f_-(x_1)>-\frac{1}{2},\ \ f_+(x_1)>-\frac{1}{2},\ \
f_2(s)>-\frac{1}{2}.
\end{equation}
This means that domain $\Om$  is above the line $x_2 =-\frac{1}{2}$.

We prescribe slip condition on the boundary:
\begin{equation}\label{con-slip-U}
    \mm \cdot \nu|_{\po \Om} =0,
\end{equation}
where $\nu$ is the outer normal on boundary $\po \Om$.

Let $(m_\ast, 0, p_0, \rho_0)$ be a constant subsonic solution for
\eqref{eqn-Euler1}, i.e.,  $m_\ast/\rho_0$ less than $
\sqrt{p_0/\rho_0}$. Fix constants $p_0, \rho_0$ and let $m_0 \le
m_\ast$ be a sufficiently small constant to be determined later. So
$U_0=(m_0, 0, p_0, \rho_0)$, as our background state, is also a
subsonic solution for \eqref{eqn-Euler1}.  We define a vector-valued
function $U_\infty=(m_\infty, 0, p_0, \rho_\infty)$ of variable
$x_2$  as the asymptotic state for our solution $U=(\mm,p, \rho)$ at
the far field. We assume that $U_\infty$ is a small perturbation of
the background solution $U_0$:
\begin{equation}\label{con-Uinfty}
    \|U_\infty - U_0\|_{2,\ga;(0); (0,\infty)} \le \ve m_0,
\end{equation}
where $ 0<\ve <\frac{1}{2} $ is a  small parameter to be determined
later.

Set $P= \{A_-, A_+\}$ as the set for the weight.

Now we state our main theorem about the existence of the subsonic
flow in the half plane $\Om$:

\medskip

\begin{theorem} \label{thm-main}
Suppose the boundary $\po \Om$ satisfies
\eqref{con-f-}--\eqref{con-boundary-above} and $U_\infty$ satisfies
\eqref{con-Uinfty}. We fix $0< \gb< \ga<1$, depending on $\gd$ in
\eqref{con-angles1}. For sufficiently small $m_0$, depending on
$m_\ast, p_0, \rho_0, \gd, \ga, \gb$ and the profile $\mathcal{A}$,
and sufficiently small $\ve$, depending on $m_\ast, p_0, \rho_0,
\gd, \ga, \gb, \mathcal{A}$ and $m_0$, there exists a subsonic
solution $U \in C^{(-\ga;P)}_{1,\ga;(\gb)}(\Om) $ for
\eqref{eqn-Euler1} with boundary condition \eqref{con-slip-U}, such
that
\begin{equation}\label{est-U-Uinfty}
    \|U- U_\infty \|^{(-\ga;P)}_{1,\ga;(\gb);\Om} \le C m_0,
\end{equation}
where $C$ is a constant only depending on $m_\ast, p_0, \rho_0, \gd,
\ga, \gb$ and the profile $\mathcal{A}$, but independent of $m_0$
and $\ve$.

\end{theorem}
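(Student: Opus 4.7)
The strategy, already foreshadowed in the introduction, is to reduce the Euler system \eqref{eqn-Euler1} to a single elliptic equation for a stream function $\psi$, solve that equation in truncated domains $\Om_R = \Om \cap B_R(O)$ by a Schauder fixed-point argument, obtain estimates uniform in $R$ by means of a barrier adapted to the half-plane geometry, and then pass to the limit $R\to\infty$ before reconstructing the full state $U$.

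First I would carry out the reduction (the content of Section~3). Introducing $\psi$ by $m_1 = \po_{x_2}\psi$, $m_2 = -\po_{x_1}\psi$ automatically enforces conservation of mass, and the connectedness of $\po\Om$ together with the slip condition \eqref{con-slip-U} permits the normalization $\psi|_{\po\Om}=0$. Because $p/\rho^\gc$ is transported along streamlines, the entropy is a function of $\psi$ alone, fixed at infinity by $U_\infty$, and Bernoulli's law inverts to give $\rho = \rho(\psi, |\nabla\psi|)$. Substituting back into the momentum equation yields the quasilinear equation \eqref{psi-elliptic}, uniformly elliptic as long as the flow stays strictly subsonic. At infinity I impose $\psi \to l(x_2) := \int_0^{x_2} m_\infty(\tau)\,d\tau$, the only profile consistent with the asymptotic data $U_\infty$.

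Next comes the iteration inside $\Om_R$. For $\tpsi$ in a weighted Hölder ball centered at $l$, I would freeze coefficients, solve the linear problem $\wtl \psi = F$ with Dirichlet data $0$ on $\po\Om \cap B_R(O)$ and $l$ on $\po B_R(O) \cap \Om$, and set $T\tpsi := \psi$. The central technical obstacle, explicitly flagged by the author, is that the zeroth-order coefficient $b_0$ in $\wtl$ has no definite sign, so the classical maximum principle does not apply. I would dispatch this via Lemma \ref{lem-compare} paired with an explicit barrier exploiting the half-plane structure, for instance $w(\xx) = \mu\, x_2 (1+|\xx|^2)^{-(1+\gb)/2}$: this function is positive in $\Om$, decays like $|\xx|^{-\gb}$ at infinity, vanishes on $\{x_2=0\}$ (hence is small on the asymptotically flat parts of $\po\Om$), and has strictly negative linear image $\wtl w$ in the relevant range once $m_0$ and $\ve$ are small. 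That yields the weighted $L^\infty$ control; weighted Schauder estimates near the corner points $A_\pm$, whose opening angles $\t^0_\pm \in (\gd, \pi-\gd)$ dictate the choice $\gb < \ga$, then upgrade the estimate to the full norm $\|\psi - l\|^{(-\ga;P)}_{2,\ga;(\gb);\Om_R}$ with a constant independent of $R$. Standard compactness and the Schauder fixed-point theorem then produce a solution $\psi_R$ of \eqref{psi-elliptic} in $\Om_R$ obeying the same $R$-free bound. The barrier works only because $\Om$ lies in a half-plane, which is exactly the step that would fail over the whole plane, where only the logarithmic fundamental solution is available.

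Finally, the uniform weighted bounds let me extract a diagonal subsequence of $\{\psi_R - l\}$ converging in $C^{1,\ga'}_\loc(\Om)$ for some $\ga' < \ga$ to a limit $\psi - l$ that solves \eqref{psi-elliptic} in all of $\Om$ with the same weighted bound, hence with the required far-field decay. Reversing the reduction formulas recovers $U = (\mm, p, \rho)$ solving \eqref{eqn-Euler1} together with \eqref{con-slip-U} and the estimate \eqref{est-U-Uinfty}; the subsonic constraint is preserved because $\|U - U_\infty\|$ is of order $m_0$ while $U_0$ is strictly subsonic. The main obstacle I anticipate is the construction and verification of the barrier and its compatibility with the indefinite lower-order terms, which is the one place where the half-plane hypothesis is genuinely used.
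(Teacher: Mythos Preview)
Your overall strategy---reduction to the stream-function equation, Schauder fixed point in $\Om_R$, barrier-based $L^\infty$ control via Lemma~\ref{lem-compare}, corner-weighted Schauder estimates, then $R\to\infty$---is exactly the paper's. The gap is in the barrier itself. Your candidate $w=\mu\,x_2(1+|\xx|^2)^{-(1+\gb)/2}$ (even after the obvious shift $x_2\mapsto x_2+1$ to make it positive on $\overline{\Om}$, which sits above $x_2=-\tfrac12$) does not give $Lw\le 0$ uniformly in $\Om_R$. The linearized operator has $|b_0|\le Cm_0(x_2+1)^{-2}$ (this is forced by the structure of $F_\psi$, cf.\ \eqref{con-b012}), while a direct computation gives $\Delta w\sim -c\,x_2|\xx|^{-3-\gb}$ for large $|\xx|$. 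In the strip where $x_2$ stays bounded and $|x_1|\to\infty$ one has $|b_0 w|\sim m_0|x_1|^{-1-\gb}$ against $|\Delta w|\sim|x_1|^{-3-\gb}$, so $|b_0 w|/|\Delta w|\sim m_0|x_1|^2\to\infty$ and $Lw$ changes sign no matter how small $m_0$ is. Your barrier is too smooth in $x_2$: its second derivatives near the lower boundary are bounded, which cannot absorb a zeroth-order term blowing up like $(x_2+1)^{-2}$.

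The paper's barrier is $v=r^{-\ga-\gb}(x_2+1)^{\ga}$ with $r=\sqrt{x_1^2+(x_2+1)^2}$, and the crucial feature is the \emph{sublinear} power $\ga<1$ on $(x_2+1)$. This makes $\partial_{x_2}^2 v$ contain the term $-\ga(1-\ga)r^{-\ga-\gb}(x_2+1)^{\ga-2}$, which is negative and carries exactly the same $(x_2+1)^{-2}$ singularity as $b_0$; the radial part contributes $(\gb^2-\ga^2)r^{-\ga-\gb-2}(x_2+1)^\ga\le 0$, which is where the hypothesis $\gb<\ga$ is actually used (not, as you suggest, in the corner regularity). With these two ingredients one gets $e\Delta v\le -c\,r^{-\ga-\gb}(x_2+1)^{\ga-2}$ and $|(L-e\Delta)v|\le Cm_0\,r^{-\ga-\gb}(x_2+1)^{\ga-2}$, so $Lv<0$ for small $m_0$, uniformly in $R$. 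Once you replace your barrier by one with this Hölder-type boundary behavior, the rest of your outline goes through as in the paper.
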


\begin{remark}
Estimate \eqref{est-U-Uinfty} immediately gives the asymptotic
behavior of the flow $U$. That is $U$ approaches $U_\infty$ in $C^0$
norm at the rate $|\xx|^{-\gb}$ as $|\xx| \to \infty$.
\end{remark}

\section{Reduction of the Euler System} \label{sec-reduce-system}
In this section, we use the conservation properties of the Euler
equations \eqref{eqn-Euler1} to reduce the four-equation system to
one elliptic equation.

By the conservation of mass (first equation of \eqref{eqn-Euler1}),
we can find a potential function $\psi$ for the vector field $(-m_2
, m_1)$, i. e.,
 \begin{equation}\label{def-stream}
 \psi_{x_1} = -m_2, \ \ \psi_{x_2} = m_1.
\end{equation}

From \eqref{eqn-Euler1}, we can derive
\begin{equation}\label{eqn:gamma-law}
(m_1 \po_{x_1} + m_2 \po_{x_2})(\gamma\ln \rho - \ln p) =0,
\end{equation}
which implies that the quantity $\rho^\gc/p$ is constant along
streamlines, provided that the solution is $C^1$ smooth. This
constant only depends on the stream function $\psi$ . Thus, we have
\begin{equation}\label{gamma-law}
    p=\frac{\gc-1}{\gc} A(\psi) \rho^\gc.
\end{equation}
We will determine function $A$ later by $U_\infty$.

 From \eqref{eqn-Euler1}, we can also
derive the Bernoulli's law:

\begin{equation}\label{bernoulli}
    \frac{|\mm|}{2\rho^2}  + \frac{\gamma p}{(\gamma-1)\rho} =B
\end{equation}
 along the streamlines, where $B$ is the Bernoulli constant depending on $\psi$.
  With equation (\ref{gamma-law}) and (\ref{def-stream}),
the Bernoulli's law (\ref{bernoulli}) can be written as
\begin{equation}\label{bernoulli-law}
    \frac{1}{2} |\grad \psi|^2 + A(\psi)\rho^{\gc+1}
    =B(\psi)\rho^2,
\end{equation}

In the subsonic region, we have
\begin{equation}\label{subsonic-conditon}
    |\grad \psi|^2 < c^2 \rho^2 = (\gc-1) A(\psi) \rho^{\gc +1}.
\end{equation}

Inequality (\ref{subsonic-conditon}) and the Bernoulli's law
(\ref{bernoulli-law}) implies

\begin{equation}\label{sub-cond-rho}
    \rho^{\gc-1} > \frac{2 B}{(\gc+1) A}.
\end{equation}

 Let $\chi = \frac{1}{2}  |\grad \psi|^2$ and $h(\rho, \psi ) = B(\psi)\rho^2
-A(\psi) \rho^{\gc+1}$. Therefore, in subsonic region,
\[
\frac{\po h }{\po \rho} =  \rho A\left(\frac{2 B}{(\gc+1)
A}-\rho^{\gc-1}\right) < 0.
\]
Hence, we can uniquely solve
\begin{equation} \label{eqn:rho}
\chi =h(\rho, \psi)
    \equiv B(\psi)\rho^2-A(\psi)\rho^{\gc+1}
\end{equation}
for $\rho= \rho(\chi, \psi)$ by implicit function theorem.


From (\ref{eqn:rho}), we can easily calculate
\begin{eqnarray}
\label{def-rhoDXi}
  \rho_\chi &=& -\frac{1}{(\gc+1) A \rho^\gc - 2 B \rho }, \\
\label{def-rhoDpsi}
  \rho_\psi &=& \frac{B' \rho - A' \rho^\gc}{(\gc+1)A \rho^{\gc-1} - 2B
  }.
\end{eqnarray}

Therefore, we compute
\begin{eqnarray}
\nonumber
  \rho_{x_1} &=& \rho_\chi (\psi_{x_1} \psi_{x_1x_1}+ \psi_{x_2}\psi_{x_1x_2} )
  + \rho_\psi \psi_{x_1}\\
   &=& \frac{ -\psi_{x_1} \psi_{x_1x_1}-\psi_{x_2}\psi_{x_1x_2} +\psi_{x_1}(B'\rho^2
   - A'\rho^{\gc+1}) }{ (\gc+1) A \rho^\gc - 2 B \rho } , \label{rho-x} \\
  \rho_{x_2} &=& \rho_\chi (\psi_{x_1} \psi_{x_1x_2}+ \psi_{x_2}\psi_{x_2x_2} )
  + \rho_\psi \psi_{x_2} \nonumber\\
   &=&\frac{ -\psi_{x_1} \psi_{x_1x_2}-\psi_{x_2}\psi_{x_2x_2} +\psi_{x_2}(B'\rho^2
   - A'\rho^{\gc+1}) }{ (\gc+1) A \rho^\gc - 2 B \rho }.
   \label{rho-y}
\end{eqnarray}

 Now we can reduce the Euler system into one
equation. We replace $\mm$ in the second equation of
\eqref{eqn-Euler1} with $(-\psi_{x_2}, \psi_{x_1})$ according to
(\ref{def-stream}). Multiplying the second equation of
\eqref{eqn-Euler1}
 by
$(\gc+1) A \rho^\gc - 2 B \rho$, and using the expressions
(\ref{rho-x}) and (\ref{rho-y}), we obtain the following equation:
\begin{equation}\label{eqn-M1}
    \psi_{x_1}(a_{ij}(\psi, \grad \psi) \psi_{x_ix_j}-F(\psi, \grad
    \psi))=0,
\end{equation}
where \begin{eqnarray}
   \label{def-a11}    && a_{11}(\psi, \grad \psi)=
    (\gc-1)A(\psi)\rho^{\gc+1}-\psi_{x_2}^2 \\
     \label{def-a12}   && a_{12}(\psi, \grad \psi)
     = a_{21}(\psi, \grad \psi)= \psi_{x_1}\psi_{x_2}\\
\label{def-a22}         && a_{22}(\psi, \grad \psi)=
    (\gc-1)A(\psi)\rho^{\gc+1}-\psi_{x_1}^2 \\
 \label{def-F}       &&F(\psi, \grad \psi)=\frac{\gc-1}{\gc}\rho^{\gc+3}
 (\gc A B' -2A'B +
AA' \rho^{\gc-1}).
      \end{eqnarray}

%
%
%

Similarly, the third equation of \eqref{eqn-Euler1} gives rise to
\begin{equation}\label{eqn-M2}
 \psi_{x_2}(a_{ij}(\psi, \grad \psi) \psi_{x_ix_j}-F(\psi, \grad
    \psi))=0.
\end{equation}

 For a system without stationary points, i.e., $\grad \psi $ is
nowhere $\mathbf{0}$, the original Euler system \eqref{eqn-Euler1}
can be reduced to the following equation for subsonic flows:

\begin{equation}\label{psi-elliptic}
a_{ij}(\psi, \grad \psi) \psi_{x_ix_j}=F(\psi, \grad
    \psi).
\end{equation}

Equation \eqref{psi-elliptic} can be written in divergence form:

\begin{equation}\label{psi-div}
    \grad \cdot \left(\frac{\grad \psi}{\rho}\right) = B' \rho - \frac{1}{\gc} A'
    \rho^\gc.
\end{equation}

Now we use the limit function $U_\infty$ to determine $A,B$ and the
limit function $l(x_2)$ of the stream function $\psi$ as $|x_1| \to
\infty $.

Define
\begin{equation}\label{def-l}
    l(x_2) = \int_0^{x_2} m_\infty (s)ds.
\end{equation}
By \eqref{con-Uinfty}, we know that
\[
\frac{1}{2} m_0 < m_\infty = l' < 2 m_0,
\]
which implies that $l$ is invertible and
\begin{equation}\label{est-l-asy}
    \frac{1}{2} m_0 x_2 < l(x_2) < 2 m_0 x_2.
\end{equation}

Let
\begin{eqnarray*}
 &&\bar{A} (x_2)= \frac{\gc p_0}{(\gc -1) \rho_\infty (x_2)}\\
 && \bar{B} (x_2) = \frac{m_\infty ^2 (x_2)}{2 \rho_\infty^2 (x_2)}
 + \frac{\gc p_0}{\rho_\infty  (x_2)}.
 \end{eqnarray*}

Then define
\begin{equation}\label{def-AB}
    A(s) = \bar{A}(l^{-1} (s)), \quad B(s)= \bar{B}(l^{-1}(s)).
\end{equation}

To describe the properties of $A$ and $B$, we need to modify the
weighted norm in \eqref{def-norm-f} as follows:
\begin{eqnarray}
\nonumber
  [f]'_{k,0;(\gb);\Gamma} &=& \sup_{x\in \Gamma}(|x|+m_0)^{k+\gb}|f^{(k)}(x)| \\
  \nonumber
   [f]'_{k,\ga;(\gb);\Gamma}&=&  \sup_{x , x'\in \Gamma, x\ne x'}
   (\max(|x|,|x'|)+m_0)^{k+\ga+\gb}\frac{|f^{(k)}(x)-f^{(k)}(x') |}{|x-x'|^{\ga}} \\
  \|f\|'_{k,\ga;(\gb);\Gamma} &=&
  \sum_{i=0}^{k}[f]'_{k,0;(\gb);\Gamma} + [f]'_{k,\ga;(\gb);\Gamma}.
  \label{def-norm'f}
\end{eqnarray}

Basically, we replace $1$ in the weight in \eqref{def-norm-f} with
$m_0$ for the scaling reason.

Set \[ A_0 = \frac{\gc p_0}{(\gc-1)\rho_0^{\gc}},
 \ \ B_0 = \frac{m_0^2}{2 \rho_0^2} + \frac{\gc p_0}{(\gc-1)\rho_0}.\]
By \eqref{con-Uinfty}, we conclude that
\begin{eqnarray}
\label{est-A}
&& \|A -A_0 \|'_{2,\ga;(0);(0,\infty)} \le C_0\ve m_0,\\
&& \|B -B_0 \|'_{2,\ga;(0);(0,\infty)} \le C_0\ve m_0,\label{est-B}
 \end{eqnarray}
where $C_0$ is a constant depending only on $m_\ast, p_0, \rho_0$.

Let us discuss the asymptotic behavior of $U$  as $|x_1| \to
\infty$. We do not expect constants states at the infinity for
general subsonic flows governed by full Euler equations. Actually,
if the flow is uniform at the infinity, we only get a potential
flow. This fact is described by the following proposition.

\begin{proposition} \label{prop-potential}
Suppose $U$ is a $C^1$ solution of \eqref{eqn-Euler1} with no
stagnation points ($\mm$ is nowhere $\mathbf{0}$). If  $U_\infty$ is
a constant state,  the flow $U$ is potential, i.e., the velocity
$\uu = \mm/\rho$ is irrotational.
\end{proposition}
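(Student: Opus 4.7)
The plan is to exploit the two scalar quantities that are transported along streamlines in any smooth Euler flow without stagnation points, namely the entropy (equivalently $A(\psi)=\gc p/((\gc-1)\rho^\gc)$) and the Bernoulli energy $B(\psi)= |\mm|^2/(2\rho^2) + \gc p/((\gc-1)\rho)$. The identifications of $A$ and $B$ as functions of $\psi$ alone have already been carried out in Section \ref{sec-reduce-system} via \eqref{gamma-law}--\eqref{bernoulli}. The key idea is that under the hypothesis that $U_\infty$ is a constant state and $\mm$ is nowhere zero, one can upgrade \emph{constant along each streamline} to \emph{globally constant}, after which a short Crocco-type computation forces the vorticity to vanish.

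First I would argue that $A(\psi)$ and $B(\psi)$ are genuine constants on $\Om$. Since $\mm\ne\mathbf{0}$ and $U\to U_\infty=(m_\infty,0,p_0,\rho_\infty)$ with $m_\infty>0$ as $|x_1|\to\infty$, every streamline through an interior point must connect to the far field: the monotonicity $\psi_{x_2}=m_1>0$ inherited from the asymptotic profile makes each level set $\{\psi=c\}$ a smooth graph extending to $x_1=\pm\infty$. Along such a streamline both $A$ and $B$ equal their limiting values $A_\infty$ and $B_\infty$, which, because $U_\infty$ is a constant state, are numbers independent of $x_2$. Hence $A$ and $B$ are constants on $\Om$.

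Next I would translate this into the vanishing of the vorticity $\om = \po_{x_1}u_2 - \po_{x_2}u_1$, where $\uu=\mm/\rho$. Using mass conservation, the momentum equation in \eqref{eqn-Euler1} reduces to $\rho(\uu\cdot\grad)\uu + \grad p = 0$. Constancy of the entropy gives $p=c\rho^\gc$ for a global constant $c$, so $\grad p/\rho = \grad h$ with $h=\gc c\rho^{\gc-1}/(\gc-1)$ the specific enthalpy. The two-dimensional identity $(\uu\cdot\grad)\uu = \grad(|\uu|^2/2) + \om\,\uu^\perp$, where $\uu^\perp=(-u_2,u_1)$, then yields
\[
\grad\!\bigl(\tfrac12|\uu|^2 + h\bigr) + \om\,\uu^\perp = 0.
\]
The bracketed quantity is precisely $B$, which has just been shown to be constant, so $\om\,\uu^\perp \equiv 0$; since $\uu\ne\mathbf{0}$, this forces $\om\equiv 0$, i.e., $\uu$ is irrotational.

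The step I anticipate as the main obstacle is the first one: passing from conservation along each streamline to global constancy of $A$ and $B$. The crux is that every streamline must actually reach the far field where $U_\infty$ is attained; in the half-plane setting with $\mm$ nowhere vanishing and horizontal asymptotic velocity $(m_\infty,0)$ this is geometrically transparent via the monotonicity of $\psi$ in $x_2$, but in a more general domain one would have to rule out closed streamlines and limit cycles, and the proposition could fail. Once global constancy is granted, the remainder is a short, standard Crocco-type manipulation.
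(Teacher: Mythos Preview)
Your argument is correct but takes a longer route than the paper's. The paper's proof is almost immediate given the machinery already set up in Section~\ref{sec-reduce-system}: since $A$ and $B$ are \emph{defined} through $U_\infty$ via \eqref{def-AB}, constancy of $U_\infty$ gives $A'=B'=0$ directly, with no streamline geometry needed; then the divergence form \eqref{psi-div} collapses to $\grad\cdot(\grad\psi/\rho)=0$, which, after substituting $\psi_{x_1}=-\rho u_2$, $\psi_{x_2}=\rho u_1$, is literally $(u_1)_{x_2}-(u_2)_{x_1}=0$. Your two steps recover the same content from first principles: your streamline-to-infinity argument is exactly what would justify the identification \eqref{def-AB} for a given solution $U$, and your Crocco computation is the physical content behind \eqref{psi-div}. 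So your proof is more self-contained and makes the mechanism (Crocco's theorem) explicit, while the paper's proof is shorter because it cashes in the reductions already performed. One caveat: your claim that $\psi_{x_2}=m_1>0$ everywhere is not part of the hypothesis (only $\mm\ne\mathbf{0}$ is), so your streamline argument is not fully rigorous as stated; you correctly flag this as the main obstacle, and indeed the paper itself, in Remark~\ref{rem-unique}, acknowledges that simple streamline topology is an implicit assumption throughout the reduction.
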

\begin{proof}
If  $U_\infty$ is constant, we immediately get $A, B$ are constants
by the procedure of obtaining $A,B$. Hence, we have $A'=B'=0$ .
Equation \eqref{psi-div}, which is equivalent to \eqref{eqn-Euler1}
under the assumption in the proposition, becomes
\[
\grad \cdot \left(\frac{\grad \psi}{\rho}\right) = 0.
\]
Since \[ \psi_{x_1} = -m_2=  -\rho u_2,\ \  \psi_{x_2} = m_1 = \rho
u_1,\] the above equation is just the irrotationality condition for
the velocity $ {(u_1)}_{x_2} - {(u_2)}_{x_1}=0 $. Therefore, we have
a potential flow.
\end{proof}

In general, the Euler system \eqref{eqn-Euler1} and equation
\eqref{psi-elliptic} are not equivalent, because the streamlines may
not be nice enough for us to do the reduction of the system.
However, the solution of \eqref{psi-elliptic} guarantees the
existence of the solution for Euler equations \eqref{eqn-Euler1}.
Therefore, we only need to solve \eqref{psi-elliptic} in order to
prove Theorem \ref{thm-main}.

By the definition of the stream function $\psi$, the slip condition
\eqref{con-slip-U} becomes the Dirichlet boundary condition for
equation \eqref{psi-elliptic}:
\begin{equation}\label{con-boundary-psi}
    \psi|_{\po \Om} =0.
\end{equation}

We define $\Sigma$ as a set for the solutions of
\eqref{psi-elliptic}:
\begin{equation}\label{def-set-sigma}
\Sigma = \{u: \|u - l\|^{(-\ga-1;P)}_{2,\ga;(\gb);\Om}  \le C^\ast
m_0 \},
\end{equation}
where  constant $C^\ast$, depending on $m_\ast, p_0, \rho_0,
\ga,\gb,\gd, \mathcal{A}$, will be determined later in the
estimates.

We state the following theorem, which implies Theorem
\ref{thm-main}:

\medskip
\begin{theorem} \label{thm-psi}
For sufficiently small $m_0$, depending on $m_\ast, p_0, \rho_0,
\ga, \gb, \gd, \mathcal{A}$, and sufficiently small $\ve$, depending
on $m_\ast, p_0, \rho_0, \ga, \gb, \gd, \mathcal{A}$ and $m_0$,
there exists a unique solution for equation \eqref{psi-elliptic}
with boundary condition \eqref{con-boundary-psi}  in the set
$\Sigma$ defined in \eqref{def-set-sigma}.
\end{theorem}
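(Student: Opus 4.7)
The plan is to follow the strategy outlined in the introduction: subtract the asymptotic limit, linearize, solve the linearized problem on a truncated domain via Schauder's fixed point theorem, and then pass to the limit as the truncation radius tends to infinity. Concretely, I would write $\psi = l + \phi$ where $l$ is defined by \eqref{def-l}, so that $\phi$ is the quantity we expect to decay at infinity and vanish on the boundary in an asymptotic sense. Substituting into \eqref{psi-elliptic} and expanding around the background stream function $l$, one obtains a quasilinear elliptic equation for $\phi$ whose coefficients depend on $\phi$ and $\nabla\phi$; this is where the framework of the weighted H\"older spaces $C^{(\sigma;P)}_{k,\alpha;(\beta)}$ becomes essential, since we need to track both the corner regularity at $A_\pm$ and the decay rate $|\xx|^{-\beta}$ at infinity.

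I would then truncate $\Omega$ by $\Omega_R = \Omega \cap B_R(O)$ for large $R$, impose a natural Dirichlet condition $\psi = l$ on $\partial B_R \cap \Omega$, and define a map $T$ on a closed convex subset $\Sigma_R \subset \Sigma$ as follows: given $\tilde\psi \in \Sigma_R$, freeze the coefficients $a_{ij}(\tilde\psi,\nabla\tilde\psi)$ and the right-hand side $F(\tilde\psi,\nabla\tilde\psi)$, and solve the resulting linear elliptic equation \eqref{eqn-lin-psi} for $\psi$ with the prescribed boundary data; set $T(\tilde\psi) = \psi$. Because $\tilde\psi$ is close to $l$ and $U_0$ is strictly subsonic, the frozen coefficients are uniformly elliptic with small perturbation from the constant-coefficient Laplacian-type operator associated to $U_0$, so standard Schauder theory on the bounded domain $\Omega_R$ (with care at the corners $A_\pm$, controlled by \eqref{con-angles1}) yields a unique solution in the weighted space. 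Continuity and compactness of $T$ on $\Sigma_R$ in a weaker norm then give a fixed point $\psi_R$ by the Schauder fixed point theorem.

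The main obstacle, and the heart of the argument, is obtaining estimates on $\psi_R - l$ in the weighted norm $\|\cdot\|^{(-\alpha-1;P)}_{2,\alpha;(\beta);\Omega_R}$ by a constant $C^\ast m_0$ that is independent of $R$. The coefficient $b_0$ of the zeroth-order term in the linearized equation has no definite sign, so one cannot apply the standard maximum principle directly. I would invoke Lemma \ref{lem-compare}, constructing an explicit barrier in the half plane of the form $w(\xx) = C m_0 (x_2 + \tfrac{1}{2})^\mu (1+|\xx|^2)^{-\beta/2}$ (or a suitable variant), which exploits the fact that $\Omega \subset \{x_2 > -1/2\}$ via \eqref{con-boundary-above}; this is precisely why the argument is restricted to the half plane. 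Combining this $L^\infty$ bound with interior and boundary Schauder estimates (with the weight $\Delta_\xx^{\beta+k+\alpha}$ absorbed by scaling on dyadic annuli, and the weight $\delta_\xx^{\max(k+\sigma,0)}$ handled by the corner theory) gives the desired $R$-independent bound, and forces the choice of $C^\ast$, provided $m_0$ and $\varepsilon$ are small enough that the nonlinear perturbation from the coefficients $a_{ij}$ and the source $F$ (controlled by \eqref{est-A}--\eqref{est-B}) can be absorbed.

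Once $T(\Sigma_R) \subset \Sigma_R$ is established, Schauder gives $\psi_R$, and the $R$-independent estimates allow one to extract a subsequence converging in $C^{1,\alpha'}_{\loc}$, $\alpha' < \alpha$, to a limit $\psi \in \Sigma$ that solves \eqref{psi-elliptic} in $\Omega$ with $\psi|_{\partial\Omega}=0$; the weighted norm bound passes to the limit by lower semicontinuity. For uniqueness in $\Sigma$, I would take two solutions $\psi_1, \psi_2 \in \Sigma$, note that $w = \psi_1 - \psi_2$ satisfies a linear elliptic equation whose coefficients are small perturbations (of order $C m_0$) of those for the background state and whose zeroth-order coefficient again has no sign, then apply the same barrier-based comparison principle of Lemma \ref{lem-compare}, together with the decay $w = o(|\xx|^{-\beta})$ dictated by the set $\Sigma$, to force $w \equiv 0$. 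The smallness of $m_0$ ensures the uniqueness estimate closes.
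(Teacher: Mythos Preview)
Your outline matches the paper's strategy closely: subtract the background $l$, truncate to $\Omega_R$, set up a fixed-point map via the linearized equation, use the barrier/comparison Lemma~\ref{lem-compare} to get $R$-independent weighted estimates (the paper's barrier is $v=r^{-\alpha-\beta}(x_2+1)^\alpha$ with $r=\sqrt{x_1^2+(x_2+1)^2}$, essentially your proposed form), apply Schauder's theorem, and pass to the limit. The existence half of your proposal is sound and needs only cosmetic adjustments (e.g.\ the paper linearizes by writing the difference $F(\psi,\nabla\psi)-F(l,0,l')$ and $a_{22}(\psi,\nabla\psi)-a_{22}(l,0,l')$ as integrals to produce the $b_i,b_0$ terms, rather than freezing $F$ as a pure source; either route closes).

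There is, however, a genuine gap in your uniqueness argument. You write that $\Sigma$ dictates $w=\psi_1-\psi_2=o(|\xx|^{-\beta})$, but membership in $\Sigma$ only gives $|w|\le 2C^\ast m_0|\xx|^{-\beta}$, i.e.\ $O(|\xx|^{-\beta})$. If you reuse the existence barrier, which itself decays like $|\xx|^{-\beta}$ along rays, then $|w|/v$ is merely bounded on $S_R$ and Lemma~\ref{lem-compare} yields only $|w|\le Cv$, not $w\equiv 0$. The paper resolves this by choosing a \emph{slower-decaying} barrier for uniqueness, namely $\bar v=v_{A_-}+v_{A_+}$ with $v_I=r_I^{-\frac{3}{4}\beta}(x_2+1)^{\beta/2}$, so that $|w|/\bar v\le C R^{-\beta/4}$ on $S_R$, which does tend to $0$.

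A second point you omit: the difference equation for $w$ has lower-order coefficients involving $(\psi_2)_{x_ix_j}$, and these blow up like $\delta_\xx^{\alpha-1}$ at the corners $A_\pm$ because of the weight in $\Sigma$. Hence Lemma~\ref{lem-compare} cannot be applied on all of $\Omega_R$; the paper excises balls of radius $r$ around the corners, applies the comparison on $\Omega_R^{P,r}$, and then sends $r\to 0$ and $R\to\infty$ in a coupled way (taking $r=R^{-1/6}$). Without these two refinements your uniqueness argument does not close.
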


We split the proof of Theorem \ref{thm-psi} into the following
steps:

\begin{enumerate}
  \item Use bounded domain $\Om_R:=\Om\cap B_R(O)$ to approach
  $\Om$, where $B_R(O)$ is the ball with radius $R$
  and centered at the origin.
   We linearize equation \eqref{psi-elliptic} and solve the
  linear equation in $\Om_R$.
  \item With proper estimates for the linear equation,
   we solve the nonlinear equation \eqref{psi-elliptic}
  in bounded domain $\Om_R$ using Schauder fixed point theorem.
  \item Let $R \to \infty$, we prove the existence of solution for \eqref{psi-elliptic}
  in $\Om$.
  \item Estimate the difference of any two solutions in $\Om_R$ and
  then let $R \to \infty$ to obtain the uniqueness of the solution
  for \eqref{psi-elliptic}.
\end{enumerate}

Henceafter, we will use  $C$ to denote generic constants, depending
on the fixed data $m_\ast, p_0, \rho_0, \ga,\gb, \gd$, and the
profile $\mathcal{A}$, but  independent of $m_0, R$.

\section{Estimates of a linear equation}

In this section, we study a linear elliptic equation
\begin{equation}\label{eqn-lin-psi}
a_{ij}(\xx) u_{x_i x_j} + b_i(\xx) u_{x_i} + b_0 (\xx)u=0,
\end{equation}
in domain $\Om_R$. The estimates of this equation will be used later
for the linearized equation. For equation \eqref{eqn-lin-psi}, we
have the following assumptions for the coefficients:
\begin{eqnarray}
\label{con-ellip-aij} && a_{ij}(\xx)\xi_i\xi_j\ge \gl
(\xi_1^2+\xi_2^2)
 \quad \mbox{for any } \xi_i \in \R,\\
\label{con-bd-aij-bi} &&\|a_{ij}- e\gd_{ij}\|_{C^{\ga}(\Om_R)}+
 \sum_{i=1}^3
\|b_i\|_{C^\ga(\Om_R)} \le C m_0,\\
\label{con-b012}&& (x_2+1)(|b_1|+|b_2|) +(x_2 +1)^2|b_0| \le Cm_0,
\end{eqnarray}
where $\lambda, e$ are constants depending on $p_0, \rho_0$, and
$\gd_{ij}=1$ for $i=j$, otherwise $\gd_{ij}=0$.

We let $R> D_0+1$ so that the boundary of $\Om_R$ includes the whole
profile $\mathcal{A}$.
Let $S_R=\{|\xx|=R\} \cap \po \Om_R$. Now we not only have the
corner points $A_-, A_+$, but also have additional corners as the
intersection of $S_R$ with $\po \Om$. These two points are denoted
by
\[
S^R_- = \Gamma_- \cap S_R, \quad S^R_+= \Gamma_+ \cap S_R.
\]
Then we define the set of boundary points for the weight as:
\[
\tilde{P} = \{ A_-, A_+, S^R_-, S^R_+ \}.
\]

 The boundary condition for \eqref{eqn-lin-psi} is:
\begin{equation}\label{con-bd-lin-psi}
    u|_{\po \Om_R} =g,
\end{equation}
where
\begin{equation}\label{con-g}
    \|g\|^{(-1-\ga;\tilde{P})}_{2,\ga;(\ga+\gb);\Om_R} \le C m_0.
\end{equation}

We then have the following lemma
\begin{lemma} \label{lem-est-u}
Suppose $u \in C^{2,\ga}(\Om_R)\cap C(\overline{\Om_R})$ is a
solution for \eqref{eqn-lin-psi} with boundary condition
\eqref{con-bd-lin-psi} and assumptions
\eqref{con-ellip-aij}--\eqref{con-g} hold. For sufficiently small
$m_0$ independent of $R$, we have the following estimate for $u$:
\begin{equation}\label{est-u-R}
    \|u\|_{2,\ga;(\gb);\Om_R}^{(-1-\ga;\tilde{P})} \le C^\ast m_0,
\end{equation}
where $C^\ast$ is a constant independent of $m_0, R$.
\end{lemma}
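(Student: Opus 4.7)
The plan is to establish \eqref{est-u-R} in three stages: (1) a pointwise decay bound $|u(\xx)|\le Cm_0\gD_\xx^{-\gb}$ via comparison with an explicit barrier that uses the half-plane geometry; (2) an upgrade to the full $C^{2,\ga}$ weighted estimate at interior points and along smooth parts of $\po\Om_R$ via scaled Schauder estimates; and (3) corner regularity at the four points of $\tilde P$. The estimates must be uniform in $R$, which is the constraint that forces the specific barrier construction in Step 1.

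For the $L^\infty$ step I would seek a positive function $\phi$ on $\Om_R$ with $L\phi\le 0$ pointwise, bounded above by $C\gD_\xx^{-\gb}$ in the interior, and dominating $C^{-1}|g|/m_0$ on $\po\Om_R$. A natural starting point is the polar ansatz $\phi_0(\xx)=r^{-\gb}F(\te)$ (with $r=|\xx|$) and $F(\te)=\cos(\gb'(\te-\pi/2))$ for some $\gb<\gb'<1$; then $F>0$ on $[0,\pi]$ and $\Delta\phi_0=-((\gb')^2-\gb^2)r^{-\gb-2}F<0$. To absorb the sign-indefinite lower-order terms $b_i\phi_{x_i}$ and $b_0\phi$, $\phi_0$ is then modified by an $(x_2+1)$-dependent prefactor; the half-plane hypothesis \eqref{con-boundary-above} ensures $x_2+1\ge\tfrac12$ globally, so the damping factors $(x_2+1)^{-j}$ from \eqref{con-b012} remain uniformly bounded and the resulting lower-order contributions are bounded by $Cm_0$ times the principal term. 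Lemma \ref{lem-compare}, a comparison principle with no sign restriction on $b_0$, then yields $|u(\xx)|\le Cm_0\phi(\xx)\le Cm_0\gD_\xx^{-\gb}$ for $m_0$ sufficiently small, uniformly in $R$.

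For Step 2, at each point $\xx_0\in\Om_R$ with $\gd_{\xx_0}\ge c>0$ I rescale to a unit region via $\yy=(\xx-\xx_0)/r$ with $r=\tfrac12\min(1,\gD_{\xx_0})$ and set $\tilde u(\yy)=\gD_{\xx_0}^{\gb}u(\xx_0+r\yy)$. The rescaled equation is uniformly elliptic with $C^\ga$-coefficient norms bounded independently of $\xx_0$ by \eqref{con-bd-aij-bi}--\eqref{con-b012}, and the rescaled boundary datum is controlled by \eqref{con-g}. Standard interior/boundary Schauder estimates produce $\|\tilde u\|_{C^{2,\ga}}\le Cm_0$, which unscales to the weighted pointwise bounds $\gD_\xx^{\gb+k}|D^k u(\xx)|\le Cm_0$ for $k\le 2$ and the corresponding $C^\ga$ seminorm. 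For Step 3, near each corner in $\tilde P$ the opening angles are bounded away from $0$ and $\pi$---at $A_\pm$ directly by \eqref{con-angles1}, and at $S^R_\pm$ uniformly in $R$ because \eqref{con-f-}--\eqref{con-f+} force $\Gamma_\pm$ to become nearly horizontal at infinity, so the angle with $\{|\xx|=R\}$ stays close to $\pi/2$---and standard corner Schauder estimates for the Dirichlet problem in such sectors produce the singular factor $\gd_\xx^{\max(k+\sigma,0)}$ with $\sigma=-1-\ga$ encoded in the upper index $(-1-\ga;\tilde P)$.

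The principal obstacle is Step 1, the barrier construction. Two features conspire to make it delicate: the zero-order coefficient $b_0$ has no definite sign, and the boundary datum $g$ decays strictly faster ($\sim|\xx|^{-\ga-\gb}$) than the interior target ($\sim|\xx|^{-\gb}$), so the barrier must interpolate between these rates while remaining a supersolution of a perturbed operator. The condition $(x_2+1)^2|b_0|\le Cm_0$ in \eqref{con-b012} is calibrated exactly to allow the construction to close, and the half-plane hypothesis enters essentially here: since $x_2>-\tfrac12$ globally, the damping weight $(x_2+1)^{-2}$ is uniformly bounded from above, whereas on the whole plane it would blow up and no analogous barrier could produce decay consistent with the logarithmic growth of the fundamental solution of the Laplacian. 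This is why the authors restrict to the half plane and why the constant $C^\ast$ in \eqref{est-u-R} can be chosen independently of $R$.
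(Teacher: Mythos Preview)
Your three-stage plan---pointwise decay via a barrier and Lemma~\ref{lem-compare}, then scaled interior/boundary Schauder estimates, then corner regularity---is exactly the paper's architecture, and your Steps~2 and~3 align with its Parts~2 and~3. The substantive issue is the barrier in Step~1. Your ansatz $\phi_0=|\xx|^{-\gb}\cos\bigl(\gb'(\te-\pi/2)\bigr)$ has $|\Delta\phi_0|\sim |\xx|^{-\gb-2}$, whereas along the bottom of the domain (large $|x_1|$, bounded $x_2$) one only has $|b_0\phi_0|\le Cm_0|\xx|^{-\gb}$ from \eqref{con-b012}; hence $|b_0\phi_0|/|\Delta\phi_0|\sim m_0|\xx|^2$ diverges and cannot be absorbed by taking $m_0$ small uniformly in $R$. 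Your stated justification---that the factors $(x_2+1)^{-j}$ are ``uniformly bounded''---is not the operative mechanism: mere boundedness of the coefficients is useless when the second-order part of $L\phi_0$ decays faster in $|\xx|$ than $b_0\phi_0$ does. A secondary defect is that $|\xx|$ can vanish inside $\Om$ (nothing in the setup places the origin outside the domain), making $\phi_0$ singular there.

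The paper's barrier is $v=r^{-\ga-\gb}(x_2+1)^\ga$ with $r=\sqrt{x_1^2+(x_2+1)^2}$ measured from $(0,-1)$, which lies strictly outside $\overline\Om$ by \eqref{con-boundary-above}. The $(x_2+1)^\ga$ factor is not a perturbative correction to a radial barrier but the source of the dominant negativity: one computes directly that $\Delta v\le -\ga(1-\ga)\,r^{-\ga-\gb}(x_2+1)^{\ga-2}$, together with $|Dv|\le Cr^{-\ga-\gb}(x_2+1)^{\ga-1}$ and $|D^2v|\le Cr^{-\ga-\gb}(x_2+1)^{\ga-2}$. Combined with \eqref{con-bd-aij-bi}--\eqref{con-b012} this gives $|(L-e\Delta)v|\le Cm_0\,r^{-\ga-\gb}(x_2+1)^{\ga-2}$, i.e.\ exactly $Cm_0$ times $|\Delta v|$, so $Lv<0$ for small $m_0$ independently of $R$. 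The key is that the $(x_2+1)^{\ga-2}$ weight appearing in $\Delta v$ matches the $(x_2+1)^{-2}$ decay of $b_0$ in \eqref{con-b012}; this is the mechanism your ``$(x_2+1)$-dependent prefactor'' should supply, but your explanation of it is not right. With this $v$, Lemma~\ref{lem-compare} gives $|u|\le Cm_0 v\le Cm_0 r^{-\gb}$ (using $x_2+1\le r$), after which Steps~2 and~3 proceed essentially as you outline.
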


To prove Lemma \ref{lem-est-u}, we need a maximum principle for the
elliptic equation \eqref{eqn-lin-psi} without restriciton on the
sign for $b_0$. We take the following lemma from \cite{hl} (Theorem
2.11):

\begin{lemma} \label{lem-compare}
Let elliptic operator $L= a_{ij}\po_{x_i}\po_{x_j} + b_i\po_{x_i} +
b_0$.  For any bounded connected domain $\mathcal{D}$, assume
$a_{ij}, b_i \in C^0(\overline{\mathcal{D}})$ and $a_{ij}$ satisfies
ellipticity condition \eqref{con-ellip-aij}. Suppose there exists a
function $v \in C^2(\mathcal{D}) \cap C^1(\overline{\mathcal{D}})$
such that $v
>0$ in $\overline{\mathcal{D}}$ and $L v \le 0$ in $\mathcal{D}$.
 Suppose $u \in
C^2(\mathcal{D}) \cap C(\overline{\mathcal{D}})$ satisfies $L u \ge
0 $ in $\mathcal{D}$. Then $\frac{u}{v}$ achieves its nonnegative
maximum on the boundary $\po \mathcal{D}$.
\end{lemma}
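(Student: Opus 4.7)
The plan is to reduce the statement to the standard weak maximum principle for elliptic operators with nonpositive zeroth-order coefficient via the substitution $w = u/v$. Since $v>0$ on the compact set $\overline{\mathcal{D}}$, this substitution is well defined with $w \in C^2(\mathcal{D}) \cap C(\overline{\mathcal{D}})$, and the conclusion that $u/v$ attains its nonnegative maximum on $\partial \mathcal{D}$ becomes a statement about subsolutions of a modified operator.

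First I would compute $Lu$ in terms of $w$ by writing $u = v w$ and applying the product rule:
\begin{equation*}
u_{x_i} = v_{x_i} w + v w_{x_i}, \qquad u_{x_i x_j} = v_{x_i x_j} w + v_{x_i} w_{x_j} + v_{x_j} w_{x_i} + v w_{x_i x_j}.
\end{equation*}
Substituting into $L u = a_{ij} u_{x_i x_j} + b_i u_{x_i} + b_0 u$ and using the symmetry $a_{ij}=a_{ji}$ to combine the cross terms, the undifferentiated coefficients of $w$ collect exactly to $Lv$, giving
\begin{equation*}
L u = (L v)\, w + v\bigl[a_{ij} w_{x_i x_j} + \tilde b_i w_{x_i}\bigr], \qquad \tilde b_i := b_i + \tfrac{2 a_{ij} v_{x_j}}{v}.
\end{equation*}
The bracketed first-order operator, call it $\tilde L$, crucially has no zeroth-order term.

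Next I would rewrite this identity as $\hat L w := \tilde L w + c(\xx) w = (L u)/v$, where the new zeroth-order coefficient is $c(\xx) = Lv(\xx)/v(\xx)$. By the hypotheses $Lv\le 0$ and $v>0$ we have $c\le 0$ in $\mathcal{D}$, while $Lu\ge 0$ together with $v>0$ gives $\hat L w \ge 0$ in $\mathcal{D}$. The coefficients of $\hat L$ inherit continuity from $a_{ij}, b_i \in C^0(\overline{\mathcal{D}})$ and from $v \in C^1(\overline{\mathcal{D}})$ with $v$ uniformly positive on $\overline{\mathcal{D}}$; the principal part $a_{ij}$ still satisfies the ellipticity condition \eqref{con-ellip-aij}.

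Finally I would invoke the classical weak maximum principle for bounded-domain elliptic operators with nonpositive zeroth-order coefficient (e.g.\ Gilbarg--Trudinger, Theorem 3.1) applied to the subsolution $w$ of $\hat L$. This yields $\sup_{\overline{\mathcal{D}}} w \le \sup_{\partial \mathcal{D}} w^+$, so any nonnegative maximum of $w = u/v$ is attained on $\partial \mathcal{D}$, which is the assertion. There is essentially no genuine obstacle: the only substantive point is that the change of variables $u=vw$ converts $L$, whose $b_0$ may have arbitrary sign, into an operator $\hat L$ whose zeroth-order coefficient $Lv/v$ has the favorable sign provided by $Lv \le 0$; once this is noted, everything reduces to a textbook result.
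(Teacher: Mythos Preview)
Your argument is correct and is exactly the standard proof; the paper does not prove this lemma itself but quotes it from Han--Lin \cite{hl}, Theorem~2.11, whose proof is precisely the substitution $w=u/v$ you describe. One small caveat: since $v$ is only assumed to lie in $C^2(\mathcal D)\cap C^1(\overline{\mathcal D})$, the zeroth-order coefficient $c=Lv/v$ involves second derivatives of $v$ and need not be bounded up to $\partial\mathcal D$, so the textbook weak maximum principle for $\hat L$ is not literally applicable; the clean fix is to apply the maximum principle for $\tilde L$ (which has no zeroth-order term and coefficients in $C^0(\overline{\mathcal D})$) on the open set $\{w>0\}$, where $\tilde L w = (Lu)/v - (Lv/v)\,w \ge 0$.
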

With this maximum principle, we start to prove Lemma
\ref{lem-est-u}:
\begin{proof}
Notice that there are two different weights in the norm in
\eqref{est-u-R} (See definition of weighted norm \eqref{def-norm}) :
the weight with upper index $-\ga-1$ is for the small scale near the
corner points $A_-, A_+, S^R_-, S^R_+$ and the weight with lower
index $\gb$ is for the large scale away from the origin. We split
the proof into three parts: {\bf Part 1} is for the estimate of
maximum norm of $u$ in the whole domain $\Om_R$;  {\bf Part 2} is
for the region near $\mathcal{A}$; and {\bf Part 3} is for the
region far away from the profile $\mathcal{A}$. Let $D=2 D_0+1$,
where $D_0$ is the radius to bound the profile $\mathcal{A}$. Let
$\Om_{D} = \Om_R \cap \{|\xx| \le D\}$ be the region for {\bf Part
2}, and $\Om^c_D=\Om_{R}\cap \{|\xx|>D-1\} $ for {\bf Part 3}.

{\bf Part 1.} In this part, we first construct a comparison function
$v$ and use the maximum principle (Lemma \ref{lem-compare}) in the
whole domain $\Om_R$ to obtain the  control of the maximum norm of
$u$.

 Define the comparison function $v$ by
\begin{equation}\label{def-compare-v}
    v(\xx) = r^{-\ga-\gb} (x_2 +1)^\ga,
\end{equation}
where $r=\sqrt{x_1^2 + (x_2 +1)^2}$.

We now verify the fact that $Lv <0$.

 First, it is easy to compute
 \begin{eqnarray}
 \nonumber  \Delta v &=& (\gb^2 - \ga^2)r^{-\ga-\gb-2}(x_2 +1)^\ga\\
 \nonumber  &&- \ga(1-\ga)r^{-\ga-\gb} (x_2 +1)^{\ga-2} \\
   &\le& - \ga(1-\ga)r^{-\ga-\gb} (x_2 +1)^{\ga-2} ,
   \label{est-v-laplace}
 \end{eqnarray}
 noticing that $0 <\gb< \ga <1$.
Also, one can verify that
\begin{eqnarray}
  \label{est-Dv} && |D v| \le C r^{-\ga-\gb}(x_2 +1)^{\ga-1},\\
  \label{est-DDv} && |D^2 v| \le C r^{-\ga-\gb}(x_2 +1)^{\ga-2}.
\end{eqnarray}
We rewrite $Lv$ as
\[
Lv= (L-e \Delta ) v + e\Delta v.
\]
By assumptions \eqref{con-bd-aij-bi} and \eqref{con-b012}, together
with \eqref{est-Dv} and \eqref{est-DDv}, we have
\begin{equation*}
   |( L-e\Delta ) v| \le C m_0 r^{-\ga-\gb}(x_2 +1)^{\ga-2}.
\end{equation*}
The above estimate and \eqref{est-v-laplace} imply that $L v <0$ in
$\Om_R$, provided $m_0$ is small enough. Obviously, $v$ is positive.
Hence, by the maximum principle (Lemma \ref{lem-compare}), we
conclude that
\[
\frac{u}{v} \le \max_{\po \Om_R}\frac{|g|}{v}.
\]
By replacing $v$ with $-v$ and using Lemma \ref{lem-compare} again,
we have
\[
|u/v| \le \max_{\po \Om_R} |g/v|.
\]
This, with assumption \eqref{con-g}, implies
\begin{equation}\label{est-u-max}
|u(\xx)| \le C m_0 r^{-\gb}.
\end{equation}

{\bf Part 2.} For the region near the profile $\mathcal{A}$, we need
to take care of the corner points $A_-, A_+$. We use the weight up
to $P$ and drop the lower index $\gb$ for the  weight away from
$\mathcal{A}$.  We treat the corner $A_-$ first, and $A_+$ can be
dealt in the same way.  For convenience, we move $A_-$ to the origin
$O$. Assume the angle between $\Gamma_-$ and $x_1$-axis at $O$
(original $A_-$) is $\theta_-$, and the angle between $\mathcal{A}$
and $x_1$-axis at $O$ is $\theta_0$. Therefore, the tangential
directions of $\Gamma_-$ and $\mathcal{A}$ at $O$ are
 \[
 \nu_- = (\cos \theta_-, \sin \theta_-), \quad \nu_0 = (\cos \theta_0, \sin
 \theta_0),
 \]
 respectively.
  Let $\bar{u} = u - g(O)-c_1 x_1 -
c_2 x_2$, where $c_1, c_2$ are linear combinations of $\frac{\po
g}{\po \nu_-}(O)$ and $\frac{\po g}{\po \nu_0}(O)$ through solving
the linear system
\begin{equation*}
   \left\{
      \begin{array}{l}
       (c_1, c_2)\cdot \nu_- =\frac{\po
g}{\po \nu_-}(O)\\
(c_1, c_2)\cdot \nu_0 =\frac{\po g}{\po \nu_0}(O).
      \end{array}
    \right.
\end{equation*}
Hence
\begin{equation*}
    |c_1| + |c_2| \le C |Dg (O)|,
\end{equation*}
and
\begin{equation}\label{con-ubar}
    \bar{u} (O) =0, \quad D \bar{u}(O) = (0,0).
\end{equation}
Choose $r_0>0$ small enough, such that $ \Om_D\cap B_{r_0}(O)$ is
connected.  Thus, for this fixed radius $r_0 < \min(D_0,1)$,
\[
\bar{u}(\xx)|_{\po \Om_D\cap B_{r_0}(O)} \le C m_0 |\xx|^{1+\ga}.
\]

We know $\bar{u}$ satisfies the following equation
\begin{equation}\label{eqn-ubar}
  \bar{L}\bar{u}\equiv  a_{ij} \po_i \po_j \bar{u} + b_i \po_i \bar{u} = F_0,
\end{equation}
where $F_0=-b_ic_i -b_0 u$. By estimate \eqref{est-u-max} and
condition \eqref{con-b012}, we have
\[
|F_0| \le Cm_0.
\]

Notice the elliptic operator in \eqref{eqn-ubar} does not contain
$b_0 \bar{u}$ term. So we can use standard maximum principle to
control $\bar{u}$. The comparison function for $\bar{u}$ is defined
in polar coordinates $(r,\theta)$ by
\[
v_1 (r,\theta)= C m_0 r^{1+\ga} \sin (\tau + (\theta - \theta_0)),
\]
for small positive $\tau$ depending on $\theta_- - \theta_0$ and $
\ga$. One can check that
\[
\bar{L} v_1 < -C m_0 < F_0 = \bar{L} \bar{u}.
\]
Also the boundary condition satisfies
\[
v_1|_{\po (\Om_D \cap B_{r_0}(O))} > C m_0 r^{1+\ga} > \bar{u}|_{\po
(\Om_D \cap B_{r_0}(O))}.
\]
By maximum principle, we conclude that
\begin{equation*}
    |\bar{u}| \le C m_0 r^{1+\ga},
\end{equation*}
for $|\xx| < r_0 $.

Once we have the above estimate near the corner $A_-$, we use
 Schauder estimates with proper scaling to obtain the
estimate near the corner $A_- (O)$:
\begin{equation} \label{est-ubar-corner}
\|u\|^{(-1-\ga;\{A_-\})}_{2,\ga;\Om_D \cap B_{\frac{r_0}{2}}(A_-)}
\le Cm_0.
\end{equation}
The procedure is standard and related details can be found in
chapter 6 of \cite{gt}. One can also refer to \cite{ccf} (Lemma 4.2)
for similar scaling argument. We sketch the proof as follows.

For any $\xx_0 \in \Om_D \cap B_{r_0/2}$, let the angle between
$\Gamma_-$ and the ray $A_-\xx_0$ be $\theta_{\xx_0}$, and the angle
between $\Gamma_-$ and $\mathcal{A}$ be $\theta^0_-$.  Consider two
cases: {\bf Case 1}, $\theta_{\xx_0} > \pi/6$ and $\theta_-
-\theta_{\xx_0}> \pi/6$ ; {\bf Case 2}, otherwise. For {\bf Case 1},
we know that $\bar{u}$ satisfies equation
\[
L \bar{u} = -b_ic_i -b_0(g(O) + c_1x_1 +c_2x_2).
\]
Take  the ball $B_{|\xx_0|/2}(\xx_0) \subset \Om_D \cap
B_{\frac{r_0}{2}}$ as the domain and by Schauder interior estimate
(Theorem 6.2, \cite{gt}), we have
\begin{equation}\label{est-ubar-int}
   \|\bar{u}\|^{(0)}_{2,\ga;B_{\frac{|\xx_0|}{2}}(\xx_0)} \le C m_0|\xx_0|^{\ga+1}.
\end{equation}
Here the upper index $(0)$ is understood as the weight up to $\po
B_{\frac{|\xx_0|}{2}}(\xx_0)$.

For {\bf Case 2}, let $\xx^\ast$ be a boundary point with the
shortest distance from $\xx_0$, and $d^\ast =
|\xx_0|\sin(\frac{3}{4}\theta_-^0)$.  Hence,
$B_{\frac{7d^\ast}{8}}(\xx^\ast)$ still contains $\xx^\ast$. We use
Schauder boundary estimate (Lemma 6.4, \cite{gt}) in the domain
$\Om_D \cap B_{d^\ast}(\xx^\ast)$ to obtain the estimate:
\begin{equation}\label{est-ubar-bd}
 \|\bar{u}\|^{(0)}_{2,\ga;B_{d^\ast}(\xx^\ast)} \le C m_0|\xx_0|^{\ga+1}.
\end{equation}
Combining \eqref{est-ubar-int} and \eqref{est-ubar-bd} gives the
corner estimate \eqref{est-ubar-corner}.  The other corner $A_+$ is
treated in the same way. Together with standard Schauder estimates
away from the corners, we conclude the estimate in $\Om_D$:
\begin{equation} \label{est-u-D}
\| u\|_{2,\ga; \Om_D}^{(-\ga-1;P)} \le C m_0.
\end{equation}


{\bf Part 3.}
 For the domain $\Om_D^c$, we also consider two kinds of
estimates: one is near the corner points $S^R_-, S^R_+$, the other
is away from the corners.

The corner estimates are similar to those in {\bf Part 2}. In brief,
consider the corner $S^R_-$ for instance. If $\xx_0 \in \Om^c_D \cap
B_{r_0}(S^R_-)$, we have the following the estimate
\begin{equation} \label{est-u-cornerS}
\|u\|^{(-1-\ga;\{S^R_-\})}_{2,\ga;\Om_D^c \cap
B_{\frac{r_0}{2}}(S_-^R)} \le C m_0 R^{-\gb}.
\end{equation}

If $\xx_0 \in \Om_D^c$  and $ |\xx_0| < R/2$ ,   the ball $
B_{\frac{|\xx_0|}{2}} (\xx_0)$ has no intersection with the outer
boundary $S_R = \{|\xx| =R\}$ or the profile $\mathcal{A}$. Using
conditions \eqref{con-ellip-aij},\eqref{con-bd-aij-bi},
\eqref{con-g} and estimate \eqref{est-u-max}, by Schauder interior
 estimates (see Theorem 6.2 in \cite{gt}),
we have
\begin{equation} \label{est-u-Dcint}
\|u\|_{2,\ga;(\gb);B_{\frac{|\xx_0|}{4}}(\xx_0)} \le C m_0,
\end{equation}
where no upper index in the norm means no weight up to $\tilde{P}$.
For $|\xx_0|\ge R/2$, we use Schauder boundary estimates (Lemma 6.4
in \cite{gt}) with the boundary condition \eqref{con-g} and estimate
\eqref{est-u-cornerS}  to obtain
\begin{equation} \label{est-u-Dcbd}
\|u\|^{(-1-\ga; \{S^R_-,
S^R_+\})}_{2,\ga;(\gb);B_{\frac{|\xx_0|}{4}}(\xx_0)\cap \Om_D^c} \le
C m_0,
\end{equation}
Estimates \eqref{est-u-D}, \eqref{est-u-Dcint} and
\eqref{est-u-Dcbd} imply estimate \eqref{est-u-R} in the lemma.

\end{proof}

By the continuity method, one can prove the existence of solutions
for \eqref{eqn-lin-psi} with estimate \eqref{est-u-R}.
The uniqueness is simply the result of the maximum principle, Lemma
\ref{lem-compare}, with the aid of the comparison function $v$
constructed in Lemma \ref{lem-est-u}. Since the procedure is
standard, we omit the proof and only state the result as follows.

\begin{lemma}
Assume \eqref{con-ellip-aij}--\eqref{con-g} holds. For sufficiently
small $m_0$, equation \eqref{eqn-lin-psi} with boundary condition
\eqref{con-bd-lin-psi} admits a unique solution $u \in C^2(\Omega_R)
\cap C(\overline{\Om_R})$.
\end{lemma}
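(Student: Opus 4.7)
The plan is to reduce the uniqueness question to the maximum principle in Lemma \ref{lem-compare} by differencing two solutions and reusing the barrier function constructed in the proof of Lemma \ref{lem-est-u}. Suppose $u_1, u_2 \in C^2(\Om_R)\cap C(\overline{\Om_R})$ are two solutions of \eqref{eqn-lin-psi}--\eqref{con-bd-lin-psi}. Since the equation and the boundary condition are both linear, the difference $w := u_1 - u_2$ satisfies
\begin{equation*}
L w \equiv a_{ij}(\xx) w_{x_i x_j} + b_i(\xx) w_{x_i} + b_0(\xx) w = 0 \quad \text{in } \Om_R,
\qquad w|_{\po \Om_R} = 0.
\end{equation*}

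The obstacle here is exactly the one already noted for the a priori estimate: since $b_0$ has no definite sign, the classical weak maximum principle does not apply directly to $w$. The workaround is to reuse the positive comparison function
\begin{equation*}
v(\xx) = r^{-\ga-\gb}(x_2+1)^\ga,
\qquad r = \sqrt{x_1^2 + (x_2+1)^2},
\end{equation*}
introduced in \textbf{Part 1} of the proof of Lemma \ref{lem-est-u}. There it was verified, using the ellipticity assumption \eqref{con-ellip-aij} and the bounds \eqref{con-bd-aij-bi}, \eqref{con-b012}, that $v>0$ on $\overline{\Om_R}$ and $L v < 0$ in $\Om_R$ provided $m_0$ is sufficiently small. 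That smallness threshold depends only on the structural constants, not on $R$, so the same $v$ serves here.

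With $v$ in hand, I would apply Lemma \ref{lem-compare} twice. Since $Lw = 0 \ge 0$, the lemma gives that $w/v$ attains its nonnegative maximum on $\po \Om_R$; but $w=0$ there, so $w/v \le 0$ throughout $\Om_R$, and hence $w \le 0$. Replacing $w$ by $-w$ (which also satisfies $L(-w) = 0$ and vanishes on $\po \Om_R$) yields $-w \le 0$, i.e.\ $w \ge 0$. Combining the two inequalities forces $w \equiv 0$ in $\Om_R$, so $u_1 = u_2$, proving uniqueness. The existence assertion in the same lemma then follows from the standard continuity method: one connects $L$ to the Laplacian $\Delta$ via the family $L_t = (1-t)\Delta + tL$ for $t\in[0,1]$, notes that the a priori estimate \eqref{est-u-R} of Lemma \ref{lem-est-u} holds uniformly in $t$ (the assumptions \eqref{con-ellip-aij}--\eqref{con-b012} are preserved along the homotopy for small $m_0$), and uses the uniqueness just established to deduce that the set of $t$ for which $L_t$ is solvable is open, closed, and nonempty in $[0,1]$.
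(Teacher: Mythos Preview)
Your proposal is correct and follows essentially the same route as the paper: the paper states that existence comes from the continuity method together with the a priori estimate \eqref{est-u-R}, and that uniqueness follows from the maximum principle Lemma~\ref{lem-compare} using the comparison function $v$ built in Part~1 of Lemma~\ref{lem-est-u}. You have simply written out the details of that sketch, including the homotopy $L_t=(1-t)\Delta+tL$ and the two-sided application of Lemma~\ref{lem-compare} to $w=u_1-u_2$, so there is no substantive difference.
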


\section{Nonlinear Equation in Bounded Domain}
In this section, we will solve equation \eqref{psi-elliptic} in the
bounded domain $\Om_R$ with boundary condition given below.

We want to prescribe the boundary data for $\psi$ such that
\eqref{con-boundary-psi} hold on $\po \Om \cap \Om_R$ and $\psi -l$
vanishes away from $\po \Om$. We will define function $g$ such that
\begin{equation} \label{con-bd-psi-l}
\psi -l = g  \quad \mbox{ on } \quad \po \Om_R.
\end{equation}
 First it is easy to construct a smooth
cutoff function $\eta (s) $ such that $\eta(s) =1$ for $|s|\le D_0$
and $\eta(s) =0$ for $|s|\ge D_0 +1 $. We also assume that
$\|\eta\|_{C^{2,\ga}(\R)} \le 10$. Let
\begin{equation}\label{def-bd-g}
    g(\xx) = -\eta(x_2) \left( (1-\eta(x_1))\,l(f_{\mbox{sign}(x_1)}(x_1))
+ \eta(x_1) \,l(x_2) \right),
\end{equation}
where $\mbox{sign}(x_1) = -$ for $x_1 <0$ and $\mbox{sign}(x_1) = +$
for $x_1 >0$.

It is easy to check that $g|_{\po \Om} = -l|_{\po \Om}$, $g =0$ for
$x_2 > D_0+1$, and also $g$ satisfies condition \eqref{con-g}. Now,
let constant $C^\ast$ in \eqref{def-set-sigma} be the same as
 in estimate \eqref{est-u-R} in Lemma \ref{lem-est-u}. The set for
solutions of \eqref{psi-elliptic} in $\Om_R$ is
\begin{equation}\label{def-set-sigmaR}
    \Sigma_R = \{u: \|u - l\|^{(-\ga-1;\tilde{P})}_{2,\ga;(\gb);\Om_R}
  \le C^\ast
m_0 \}.
\end{equation}

 We state our lemma for the solution of
\eqref{psi-elliptic} in $\Om_R$:

\begin{lemma} \label{lem-psi-OmR}
Equation \eqref{psi-elliptic} in $\Om_R$ with boundary condition
\eqref{con-bd-psi-l} admits a unique solution in the set $\Sigma_R$,
provided $m_0, \ve$  are sufficiently small.
\end{lemma}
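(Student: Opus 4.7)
The plan is to apply the Schauder fixed point theorem to a map built from the linear theory of Section~4. Set $\Sigma_R^0 := \{ w : w + l \in \Sigma_R \}$, a closed convex set in the weighted H\"older space of Lemma~\ref{lem-est-u}. Given any $\tilde w \in \Sigma_R^0$ with $\tilde\psi := l + \tilde w$, I would freeze the coefficients of \eqref{psi-elliptic} at $\tilde\psi$ and rewrite the nonlinear equation in the variable $w = \psi - l$. Because the background profile $l$ itself satisfies the reduced equation (since $U_\infty$ is an exact solution of \eqref{eqn-Euler1}), the forcing $F(\psi, \nabla \psi) - a_{22}(\psi, \nabla\psi)\, l''$ obtained after substituting $\psi = l + w$ vanishes at $w \equiv 0$. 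A Taylor expansion around $l$ then lets me absorb it into linear lower-order terms, producing a linear Dirichlet problem
\[
a_{ij}(\tilde\psi, \nabla \tilde\psi)\, w_{x_i x_j} + b_i[\tilde\psi]\, w_{x_i} + b_0[\tilde\psi]\, w = 0 \quad \text{in } \Om_R, \qquad w = g \quad \text{on } \partial \Om_R,
\]
with $g$ given by \eqref{def-bd-g}. The map $T : \tilde w \mapsto w$ returns the unique solution of this problem, whose existence and uniqueness are furnished by Lemma~\ref{lem-est-u} and its companion uniqueness lemma in Section~4.

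\textbf{Self-map and Schauder.} The next step is to verify that the frozen coefficients and boundary data satisfy \eqref{con-ellip-aij}--\eqref{con-g}. Ellipticity with constant depending only on $p_0,\rho_0$ follows from the subsonicity of the background state and the $\Sigma_R$ smallness of $\tilde w$. The $C^\ga$ bounds on $a_{ij} - e\gd_{ij}$ and on $b_i, b_0$ follow from the formulas \eqref{def-a11}--\eqref{def-F} and the implicit formula \eqref{eqn:rho} for $\rho$, combined with the smallness estimates \eqref{est-A}--\eqref{est-B}. The weighted bound \eqref{con-b012} on $b_i, b_0$ requires matching the $\gb$-index decay carried by $A - A_0$, $B - B_0$ and by $\tilde w - l$ to produce the required $(x_2 + 1)^{-1}$ and $(x_2+1)^{-2}$ decay. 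Finally, the bound \eqref{con-g} on $g$ follows by direct inspection of \eqref{def-bd-g} and the assumptions on $f_\pm$ and on $l$. Lemma~\ref{lem-est-u} then gives
\[
\| w \|_{2,\ga;(\gb);\Om_R}^{(-1-\ga;\tilde P)} \le C^\ast m_0,
\]
with $C^\ast$ precisely the constant fixed in \eqref{def-set-sigmaR}, so $T$ sends $\Sigma_R^0$ into itself. Compactness of $\Sigma_R^0$ in the weaker weighted norm $\|\cdot\|_{2,\ga';(\gb');\Om_R}^{(-1-\ga';\tilde P)}$ with $\ga' < \ga$, $\gb' < \gb$ is standard on the bounded domain $\Om_R$, and continuity of $T$ in this weaker norm follows from the linear estimate applied to the difference of two solutions with nearby $\tilde w$. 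Schauder's theorem then produces a fixed point, and $\psi := l + w$ solves \eqref{psi-elliptic} with boundary data \eqref{con-bd-psi-l}.

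\textbf{Uniqueness and main obstacle.} If $\psi^{(1)}, \psi^{(2)} \in \Sigma_R$ are two solutions, the difference $w := \psi^{(1)} - \psi^{(2)}$ satisfies a homogeneous linear equation of the form \eqref{eqn-lin-psi} with coefficients obtained via mean-value expansion between $\psi^{(1)}$ and $\psi^{(2)}$, which inherit \eqref{con-ellip-aij}--\eqref{con-b012} because both $\psi^{(i)}$ lie in $\Sigma_R$. The uniqueness lemma at the end of Section~4, applied with zero boundary data, forces $w \equiv 0$. I expect the main technical obstacle to be the verification of \eqref{con-b012} for the frozen lower-order coefficients: the weights $(x_2+1)$ and $(x_2+1)^2$ demand that $b_i, b_0$ decay at precisely the right rate in $x_2$, and establishing this requires exploiting both the cancellation produced by expanding around $l$ (so the leading background forcing vanishes) and the interplay between the chain-rule factors arising from $\psi \mapsto l^{-1}(\psi)$, which scale like $m_0^{-1}$, and the $\ve m_0$ smallness of $A - A_0, B - B_0$ from \eqref{est-A}--\eqref{est-B}. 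Tracking these cancellations and factor balances is the computationally delicate core of the argument.
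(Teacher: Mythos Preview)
Your existence argument via Schauder's fixed point theorem is essentially the paper's approach: linearize around $l$ using that $l$ itself solves \eqref{psi-elliptic}, freeze coefficients at $\tilde\psi$, apply Lemma~\ref{lem-est-u} to land back in $\Sigma_R$, and conclude by compactness in a slightly weaker H\"older space. Your identification of the verification of \eqref{con-b012} as the delicate step is also correct; the paper handles it by a case split according to whether $|\psi-l|\le \tfrac14 m_0 x_2$, which is the concrete mechanism behind the ``interplay'' you describe.

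There is, however, a genuine gap in your uniqueness argument. When you form the difference $w=\psi^{(1)}-\psi^{(2)}$ of two solutions and linearize, the lower-order coefficients $b_0,b_1,b_2$ pick up the factor $(\psi^{(2)})_{x_ix_j}$, coming from differentiating $a_{ij}(\psi,\nabla\psi)\psi_{x_ix_j}$ in its $(\psi,\nabla\psi)$ arguments. Since $\psi^{(2)}\in\Sigma_R$ only controls second derivatives with the corner weight $\gd_\xx^{\,\ga-1}$, these $b_i$ blow up at the four corner points in $\tilde P$. Consequently they do \emph{not} satisfy \eqref{con-bd-aij-bi}, and they are not even continuous up to $\overline{\Om_R}$ as required by Lemma~\ref{lem-compare}. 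So you cannot invoke the Section~4 uniqueness lemma directly with zero boundary data.

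The paper's remedy is to excise small balls $B_r(I)$ around each corner $I\in\tilde P$, work on the truncated domain $\Om_R^{\tilde P,r}$ where the $b_i$ are continuous, and build a tailored comparison function $\tilde v=\sum_{I\in\tilde P} r_I^{-3\gb/4}(x_2+1)^{\gb/2}$ that itself blows up at the corners. One checks $L\tilde v<0$ on $\Om_R^{\tilde P,r}$ using the corner-weighted bound $|b_i|\le Cm_0(1+\sum_I r_I^{\ga-1})$, applies Lemma~\ref{lem-compare} there, and then sends $r\to 0$; the $C^{1,\ga}$ regularity of $w$ up to the corners and the blow-up of $\tilde v$ combine to force $\sup|w|\le 0$. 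This corner-excision plus singular-barrier step is the missing ingredient in your proposal.
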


\begin{proof}

To prove the lemma, we first linearize the nonlinear equation
\eqref{psi-elliptic}. By solving the linearized equation, we
construct a map $T$ in the set $\Sigma_R$ .
 The solution of the nonlinear equation
\eqref{psi-elliptic} is a fixed point of $T$.

We start the proof with the linearization of \eqref{psi-elliptic}.
We know that the limit function $l$ satisfies \eqref{psi-elliptic}
by its definition. That means the following equation holds:
\begin{equation}\label{eqn-l}
    a_{ij}(l, 0, l')\,l_{x_i x_j} = F (l, 0, l'),
\end{equation}
where $a_{ij}, F$ are defined in \eqref{def-a11}--\eqref{def-a22}.
Taking the difference of equations \eqref{psi-elliptic} and
\eqref{eqn-l} leads to
\begin{eqnarray}\nonumber
   && a_{ij}(\psi, \grad \psi)(\psi - l)_{x_i x_j} + l''(a_{22}(\psi, \grad
\psi)-a_{22}(l,0, l'))\\
&=&F(\psi, \grad \psi) -F(l, 0, l').\label{eqn-psi-l}
\end{eqnarray}
Denote $(l+s(\psi -l), \grad ( l+s(\psi -l)))$ by
$\mathbf{t}^\psi_s$, and let
\begin{eqnarray}
  a_{ij}^\psi &=& a_{ij}(\psi, \grad \psi) ,\\
  b_0^\psi&=& l''\int_0^1 (a_{22})_\psi(\mathbf{t}^\psi_s)d\,s -
\int_0^1 F_\psi (\mathbf{t}^\psi_s)d\,s,\\
 (b_1^\psi, b_2^\psi)&=&l''\int_0^1 (a_{22})_{\grad \psi}(\mathbf{t}^\psi_s)d\,s -
\int_0^1 F_{\grad \psi} (\mathbf{t}^\psi_s)d\,s. \label{def-bipsi}
\end{eqnarray}
Then we linearize equation \eqref{eqn-psi-l} as follows:
\begin{equation}\label{eqn-psitil-l-lin}
    a^\psi_{ij} (\tilde{\psi} - l )_{x_i x_j}
 + b_i^\psi(\tilde{\psi} - l) + b_0^\psi (\tilde{\psi} - l)=0.
\end{equation}
We solve the above equation by applying Lemma \ref{lem-est-u}. In
the following, we will check the conditions
\eqref{con-ellip-aij}--\eqref{con-b012} for $\psi \in \Sigma_R$. In
fact, \eqref{con-ellip-aij} will be satisfied if
\eqref{con-bd-aij-bi} holds with sufficiently small $m_0$. We let
$e$ in \eqref{con-bd-aij-bi} be $a_{11} (m_0x_2, 0, m_0)=a_{22}
(m_0x_2, 0, m_0) = \gc p_0 \rho_0$. By the expressions for $a_{ij}$,
\eqref{def-a11}--\eqref{def-a22}, and \eqref{def-rhoDpsi},
\eqref{def-rhoDXi},  it is not hard to verify that
\begin{eqnarray*}
   && \|(a_{ij})_\psi(\mathbf{t}_s^\psi)\|_{C^\ga (\Om_R)}
\le \frac{\ve} {m_0} C C^\ast,\\
 && \|(a_{ij})_{\grad \psi}(\mathbf{t}_s^\psi)\|_{C^\ga (\Om_R)}
 \le \frac{\ve} {m_0} C C^\ast,
\end{eqnarray*}
for $\psi \in \Sigma$ and $m_0$ small.

In the proof of this lemma, the generic constants $C$ are
independent of $C^\ast$.

 Let $\ve < \frac{m_0}{(C^\ast)^2}$,
we have
\begin{eqnarray*}
 && \|a_{ij}^\psi - e \gd_{ij}\|_{C^\ga \Om_R}\\
&\le & \int_0^1 \|(a_{ij})_{\psi}(\mathbf{t}_s^\psi)\|_{C^\ga
(\Om_R)} ds \|\psi -m_0x_2\|_{C^\ga (\Om_R)} \\
&&+ \int_0^1 \|(a_{ij})_{\grad \psi}(\mathbf{t}_s^\psi)\|_{C^\ga
(\Om_R)} ds \|\grad(\psi -m_0x_2)\|_{C^\ga (\Om_R)}\\
&\le & \ve C (C^\ast)^2  \\
&\le& Cm_0.
\end{eqnarray*}
In the same manner, we can obtain \[ \|b_{i}\|_{C^\ga (\Om_R)} \le
Cm_0, \quad i=0,1,2.
\]

 The above estimates  lead to condition \eqref{con-bd-aij-bi}.
Now we verify condition \eqref{con-b012}. For $b^\psi_0$, by its
expression, we need to estimate $(a_{22})_\psi$ and $F_\psi$. By the
definition of $F$, \eqref{def-F}, and estimates \eqref{est-A},
\eqref{est-B}, we have
\[
|F_\psi (\psi, \grad \psi) | \le  \frac{\ve m_0 C}{(m_0 +|\psi|)^2}.
\]
Let $u_s = l + s(\psi -l)$. For any $\psi \in \Sigma$, we consider
two cases: {\bf Case 1},  $|\psi -l| \le \frac{1}{4} m_0 x_2$; {\bf
Case 2}, otherwise. For {\bf Case 1},
\[ u_s \ge l - \frac{1}{4} m_0 x_2 \ge \frac{1}{4} m_0 x_2,
 \]
noticing \eqref{est-l-asy}. Therefore,
\begin{eqnarray*}
|F_\psi( \mathbf{t}^\psi_s)| &=& |F_\psi(u_s, \grad u_s)| \le
\frac{\ve m_0 C}{(m_0 +|u_s|)^2} \le \frac{\ve C m_0}{(1+x_2)^2},
\end{eqnarray*}
for $\ve < {m_0}^2$. For {\bf Case 2}, i.e.,  $|\psi - l| >
\frac{1}{4}m_0 x_2$, since $\psi \in \Sigma_R$, we have
\[
 \frac{1}{4}m_0 x_2 < |\psi -l| \le \frac{C^\ast m_0}{|\xx|^\gb}.
\]
This implies that $x_2 < (4C^\ast)^{\frac{1}{1+\gb}} \equiv R_0$.
Hence, for $\ve < ((m_0/(1+R_0))^2$, we have
\[
|F_\psi( \mathbf{t}^\psi_s)| \le \frac{\ve C}{m_0} \le \frac{\ve C
m_0 (1+ R_0)^2}{(m_0)^2 (1+x_2)^2} <\frac{ C m_0}{(1+x_2)^2}.
\]
 The above analysis about both $\bf Case 1$ and {\bf Case 2} gives rise to
\begin{equation}\label{est-int-FDpsi}
\left|\int_0^1 F_\psi (\mathbf{t}^\psi_s)d\,s\right| \le \frac{ C
m_0}{(1+x_2)^2}.
\end{equation}

Similarly, we have
\[
|(a_{22})_\psi( \mathbf{t}^\psi_s)| \le \frac{ C m_0}{1+x_2}.
\]
Together with the fact that $|l''(x_2)| < \frac{\ve C m_0}{1+x_2}$,
we conclude that
\[
|b_0^\psi| < \frac{C m_0}{(1+x_2)^2}.
\]
Same arguments apply to the estimates for $b^\psi_1, b^\psi_2$:
\[
|b_1^\psi| + |b_2^\psi| < \frac{ C m_0}{1+x_2}.
\]
Therefore, we have verified condition \eqref{con-b012}. Thus, we
apply Lemma \ref{lem-est-u} to solve \eqref{eqn-psitil-l-lin} for
$\tilde{\psi}$, where $u=\tilde{\psi} -l$ in Lemma \ref{lem-est-u}.
By estimate \eqref{est-u-R}, we know that $\tilde{\psi} \in
\Sigma_R$. Therefore, we can define a map $T$ from $\Sigma_R$ to
itself by $T\psi \equiv \tilde{\psi}$. It is obvious that a solution
 for the nonlinear equation \eqref{psi-elliptic} is a fixed point
of $T$. In order to prove the existence of a fixed point of $T$, we
apply Schauder fixed point theorem, which says: if $\Sigma_R$ is a
compact convex set of a Banach space $\mathcal{B}$, and $T: \Sigma_R
\to \Sigma_R$ is continuous in $\mathcal{B}$, $T$ has a fixed point.

Now we let $\mathcal{B} = C^{(-1-\ga';\tilde{P})}_{2,\ga';(\gb)}(
\Om_R)$ (cf. definition \eqref{def-spaceHolder}), where $0<\ga'<
\ga$. Obviously, $\Sigma_R$ is compact and convex in $\mathcal{B}$.
We only need to verify $T$ is continuous in $\Sigma_R$. We prove
this by contradiction argument.

Suppose $T$ is not continuous. Then there exists a sequence
$\{\psi_n\} \subset \Sigma_R$ such that $\psi_n \to \psi$ in
$\mathcal{B}$, but $T\psi_n$ does not converge to $\tilde{\psi} = T
\psi$. This implies that we can find a subsequence $\{
T\psi_{n_k}\}$ such that
\begin{equation} \label{diff-psi}
\| T \psi_{n_k} - \tilde{\psi}\|_{\mathcal{B}} \ge c_0 >0,
\end{equation}
 where $\|
\cdot \|_{\mathcal{B}}$ denotes the weighed norm $\|
\cdot\|^{(-1-\ga';\tilde{P})}_{2,\ga';(\gb);\Om_R}$ and $c_0$ is a
fixed constant.  Since $\{T \psi_{n_k}\}$ is compact in
$\mathcal{B}$, there exists a subsequence, still denoted by  $\{T
\psi_{n_{k}}\}$, convergent to $\bar{\psi} \in \Sigma_R$. On the
other hand, $\psi_n \to \psi$ in $\mathcal{B}$ implies that
$a^{\psi_{n_{k}}}_{ij} \to a^\psi_{ij} (i,j=1,2) $,
$b_i^{\psi_{n_{k}}} \to b_i^\psi (i=0,1,2)$ in $C^{\ga'}$ norm. Let
$k \to \infty$  and we see that $\bar{\psi}$ is also a solution of
\eqref{eqn-psitil-l-lin} with the same boundary condition
\eqref{con-bd-lin-psi}. Inequality \eqref{diff-psi} implies that
$\|\tilde{\psi} -\bar{\psi}\|_{\mathcal{B}} \ge c_0 >0$, which means
$\tilde{\psi}, \bar{\psi}$ are two distinct solutions for
\eqref{eqn-psitil-l-lin}. This contradicts with the uniqueness of
the solution for \eqref{eqn-psitil-l-lin}. Hence, we verified the
continuity of $T$ in $\Sigma_R$.

By Schauder fixed point theorem, there exists a fixed point $\psi_R$
for $T$.  So $\psi_R$ is a solution for the nonlinear equation
\eqref{psi-elliptic} in $\Om_R$ with boundary condition
\eqref{con-bd-psi-l}.

The uniqueness of the solution for \eqref{psi-elliptic} is proved by
the maximum principle Lemma \ref{lem-compare} as follows.

For any two solutions $\psi_1, \psi_2 \in \Sigma_R$ of
\eqref{psi-elliptic}, \eqref{con-bd-psi-l}, we take the difference
of the two equations and obtain
\[
a_{ij}^{\psi_1} (\psi_1 - \psi_2)_{x_i x_j} +
(\psi_2)_{x_ix_j}(a_{ij}^{\psi_1}-a_{ij}^{\psi_2} )=F(\psi_1, \grad
\psi_1) - F(\psi_2, \grad \psi_2).
\]
Similar to the notations in \eqref{eqn-psi-l}--\eqref{def-bipsi}, we
set
\[
\mathbf{t}_s=(\psi_2 + s(\psi_1 -\psi_2),\grad (\psi_2 + s(\psi_1
-\psi_2) )),
\]
and let $u=\psi_1 - \psi_2$. Then we derive the following equation:
\begin{equation}\label{eqn-psi12}
    a_{ij}^{\psi_1} u_{x_i x_j} + b_i u_{x_i} + b_0 u=0,
\end{equation}
where
\begin{eqnarray}\label{def-b0psi12}
 b_0&=&(\psi_2)_{x_ix_j}\int_0^1 (a_{ij})_\psi(\mathbf{t}_s)d s -
\int_0^1 F_\psi (\mathbf{t}_s)d s,\\
 (b_1, b_2)&=&(\psi_2)_{x_ix_j}\label{def-b12psi12}
\int_0^1 (a_{ij})_{\grad \psi}(\mathbf{t}_s)d s - \int_0^1 F_{\grad
\psi} (\mathbf{t}_s)d s. \label{def-bipsi12}
\end{eqnarray}
Notice that the factor $(\psi_2)_{x_ix_j}$ in \eqref{def-b0psi12}
and \eqref{def-b12psi12} blows up at the corner points in
$\tilde{P}= \{A_-, A_+, S^R_-, S^R_+\}$.  Lemma \ref{lem-compare}
requires continuity of  coefficients $a_{ij}, b_i$ up to the
boundary. Therefore, in order to apply the lemma,  we truncate small
neighborhoods around corners from $\Om_R$. Define
\begin{equation}\label{def-BPr}
    B_{\mathcal{P},r} = \bigcup_{I \in \mathcal{P}} B_r(I),
\end{equation}
where $\mathcal{P}$ is a set of boundary points on $\Om_R$. Let
\begin{equation}\label{def-OmRr}
    \Om_R^{\mathcal{P}, r} = \Om_R \backslash \overline{B_{\mathcal{P},r}
}, \quad S_{\mathcal{P},r} = \po B_{\mathcal{P},r} \cap \Om_R.
\end{equation}
Now, we know $b_i \in C\left(\overline{\Om_R^{\tilde{P}, r}}\right),
i=0,1,2 $.  Define $r_I = |\xx - I|$, for any corner point $I \in
\tilde{P}$. Then we have the following estimates for $b_i$:
\begin{equation}\label{est-bipsi12}
(x_2+1)(|b_1|+|b_2|) +(x_2 +1)^2|b_0| \le
Cm_0(1+\sum_{I\in\tilde{P}}r_I^{\ga-1}).
\end{equation}
We construct a comparison function $\tilde{v}$ as follows: Let
\begin{equation}\label{def-vI}
v_I = r_I^{-\frac{3}{4} \gb}(x_2 +1)^{\frac{\gb}{2}}.
\end{equation}
 Define
$\tilde{v} = \sum_{I\in\tilde{P}} v_I$.  By
\eqref{est-v-laplace}--\eqref{est-DDv}, we can verify that
\begin{eqnarray}
 a_{ij}^{\psi_1} (v_I)_{x_i x_j} \le -c_0 (r_I^{-\frac{3}{4}\gb
-2}(x_2 +1)^{\frac{\gb}{2}} + r_I^{-\frac{3}{4}\gb }(x_2
+1)^{\frac{\gb}{2}-2} ) \label{est-aijv},&&\\
\left|\sum_{i=1,2}b_i(v_I)_{x_i}+ b_0v_I \right| \le Cm_0 (
1+\sum_{I\in\tilde{P}}r_I^{\ga-1})r_I^{-\frac{3}{4}\gb}(x_2
+1)^{\frac{\gb}{2}-2},&& \label{est-biDv}
\end{eqnarray}
where $c_0 >0$ is a constant only depending on $\gb$. Inequalities
\eqref{est-aijv} and \eqref{est-biDv} directly imply that
\[
a_{ij}^{\psi_1} \tilde{v}_{x_ix_j} +b_i \tilde{v}_{x_i}+ b_0
\tilde{v} <0.
\]
 By Lemma \ref{lem-compare}, we have
\begin{equation}\label{est-u/vtil}
\sup_{\Om_R^{\tilde{P}, r}} \frac{|u|}{\tilde{v}} = \max_{\po
\Om_R^{\tilde{P}, r}} \frac{|u|}{\tilde{v}}.
\end{equation}

Since $u|_{\po \Om_R}= 0$ and $u\in C^{1,\ga}(\Om_R) $, we know that
\[
\max_{\po \Om_R^{\tilde{P}, r}} \frac{|u|}{\tilde{v}} = \max_{
S_{\tilde{P}, r}} \frac{|u|}{\tilde{v}} \le Cr^\ga
R^{\frac{3}{4}\gb}.
\]
Hence, we have
\begin{equation*}
    \sup_{\Om_R^{\tilde{P}, r}} |u|
 \le  C R^{\frac{3}{4}\gb}r^\ga \sup_{\Om_R^{\tilde{P}, r}}\tilde{v}
\le C R^{\frac{3}{4}\gb}r^\ga (1+ r^{-\frac{3}{4}\gb}).
\end{equation*}
Therefore, by letting $r\to 0$,  the above inequality implies
$\sup_{\Om_R} |u| \le 0$. This shows the uniqueness of the solution
for \eqref{psi-elliptic} in $\Om_R$. Hence, the proof of this lemma
is complete.

\end{proof}

\section{Subsonic Flow in Half Plane}

After we solve \eqref{psi-elliptic} in $\Om_R$, we let $R$ tend to
infinity to prove Theorem \ref{thm-psi} as follows.
\begin{proof}[Proof of Theorem \ref{thm-psi}]
By Lemma \ref{lem-psi-OmR}, for a given radius $R$, we can find a
unique solution $\psi_R \in \Sigma_R$.  By a diagonal process, one
can choose proper sequence $R_n \to \infty$ as $n \to \infty$, such
that $\psi_{R_n}$ converges to some function $\psi$ in $ \| \cdot
\|^{(-\ga'-1;P)}_{2,\ga';(\gb);\Om_Q}$ norm, for any fixed $Q >
D_0+1$. Since $\psi_{R_n} \in \Sigma_{R_n}$, we have
\begin{equation}\label{est-psi-r/2}
 \|\psi_{R_n} - l\|^{(-\ga-1;P)}_{2,\ga;(\gb);\Om_{\frac{R_n}{2}}}  \le C^\ast m_0,
\end{equation}
for any $R_n > 2(D_0+1)$. Let $n \to \infty$ in \eqref{est-psi-r/2},
we obtain estimate
\begin{equation}\label{est-psi-Om}
 \|\psi - l\|^{(-\ga-1;P)}_{2,\ga;(\gb);\Om}  \le C^\ast m_0,
\end{equation}
which implies  $\psi \in \Sigma$. This completes the existence of
solutions for \eqref{psi-elliptic}.

To prove the uniqueness of the solution, we will use the asymptotic
behavior of solutions described by set $\Sigma$. We still use the
truncated domain $\Om_R$ and follow the same strategy as in the
uniqueness part of Lemma \ref{lem-psi-OmR}. Now, the situation here
is slightly different from that in Lemma \ref{lem-psi-OmR}: (1) we
have no singularity for $(\psi_2)_{x_i x_j}$ at corners $S^R_-,
S^R_+$; (2) $u= \psi_1 -\psi_2$ does not vanish on the boundary
portion $S_R= \{|\xx|=R\}\cap \Om$.

Similarly as in Lemma \ref{lem-psi-OmR}, we have the same equation
\eqref{eqn-psi12} for $u$. We define the comparison function
$\bar{v}$ by $\bar{v} = v_{A_-} + v_{A_+}$, where $v_{A_-}, v_{A_+}$
are defined in \eqref{def-vI}. The domain we consider here is
$\Om_R^{P, r}$ defined by \eqref{def-BPr}.  By the same computation
as in Lemma \ref{lem-psi-OmR}, we have
\[
a_{ij}^{\psi_1} \bar{v}_{x_ix_j} +b_i \bar{v}_{x_i}+ b_0 \bar{v} <0.
\]
We know that $|u| \le CR^{-\gb}$ on $S_R$ by the definition of
$\Sigma$. Also, we see
\[
|u|_{S_{P,r}}| \le Cr^\ga, \quad  \bar{v}|_{S_R} \ge
R^{-\frac{3}{4}\gb}, \quad  \bar{v}|_{S_{P,r}} \ge
r^{-\frac{3}{4}\gb}.
\]
Therefore, we conclude that
\begin{equation}\label{est-u/vbar}
\sup_{ \Om_R^{P, r}} \frac{|u|}{\tilde{v}}  \le C \max (r^{\ga
+\frac{3}{4}\gb}, R^{-\frac{1}{4}\gb}).
\end{equation}
Let $r=R^{-\frac{1}{6}}$ and we obtain
\[
\sup_{\Om_R^{P, r}} |u|
 \le CR^{-\frac{1}{4}\gb}\sup_{\Om_R^{P, r}} \bar{v}
\le CR^{-\frac{1}{4}\gb} (1+ R^{\frac{\gb}{8}}) \le
CR^{-\frac{\gb}{8}}.
\]
Letting $R \to \infty$ gives rise to $\sup_{\Om}|u| \le 0$, which
implies the uniqueness of the solution for \eqref{psi-elliptic} in
$\Sigma$. This finishes the proof of Theorem \ref{thm-psi}.
\end{proof}

Once we proved Theorem \ref{thm-psi}, define $U=(\mm, p,\rho)$ by
\[
\mm=(\psi_{x_2}, -\psi_{x_1}), \quad \rho=\rho(\psi, \grad \psi),
\quad p= \frac{\gc-1}{\gc}A(\psi)\rho^\gc,
\]
where $\rho$ is uniquely solved from Bernoulli's law
\eqref{bernoulli-law}. Hence, by \eqref{eqn-M1}, \eqref{eqn-M2} and
\eqref{bernoulli-law}, we can recover the original Euler equations
\eqref{eqn-Euler1}. It is easy to check that $U$ satisfies
\eqref{est-U-Uinfty} with the aid of estimate \eqref{est-psi-Om}.

\begin{remark} \label{rem-unique}
The uniqueness of the solution for Euler system \eqref{eqn-Euler1}
can not be obtained from Theorem \ref{thm-psi} due to two obstacles.
One is the existence of stagnation points, which disqualifies
equivalence between  equation \eqref{psi-elliptic} and  the two
momentum equations \eqref{eqn-M1}, \eqref{eqn-M2}. The corners $A_-,
A_+$ on $\po \Om$ are necessarily stagnation points, because $\grad
\psi$ is continuous up to the corners. Whether or what kind of
stagnation points may appear inside domain $\Om$ is not clear.  The
other problem is about the complexity of streamline topology. During
the reduction in section \ref{sec-reduce-system}, we assume
streamlines have simple topology, which means that streamlines in
$\Om$ extend from $-\infty$ to $\infty$ in $x_1$, so that
information about $A,B$ can be carried along streamlines and reach
the whole domain $\Om$. However, the geometry of profile
$\mathcal{A}$ may be complicated and cause nontrivial topology of
streamlines, such as closed orbits or intersection of streamlines at
stagnation points. The above reasons prevent us from obtaining the
uniqueness for the Euler flows out of Theorem \ref{thm-psi}.
\end{remark}

\noindent {\bf Acknowledgments}. \quad I would like to thank Gui-Qiang Chen,
 Mikhail Feldman and Marshall Slemrod for their constructive suggestions
 and stimulating discussions.
 The paper
was supported in part by the
National Science Foundation under Grants DMS-0354729.

\end{document}